\documentclass[preprint,12pt,authoryear]{elsarticle}

\journal{Linear Algebra and its Applications}

\usepackage{amssymb,amsmath}
\setcitestyle{square,sort,comma,numbers}

\usepackage{graphicx}

\usepackage{lineno,hyperref}
\usepackage{amssymb}
\usepackage[top=1in, bottom=1in, left=1in, right=1in]{geometry}
\usepackage{amssymb,amsfonts}
\usepackage{bbm}
\usepackage[all,arc]{xy}
\usepackage{enumerate}
\usepackage{mathptmx}
\usepackage{mathrsfs}
\usepackage[T1]{fontenc}
\usepackage[applemac]{inputenc}
\usepackage[english]{babel}
\usepackage{times}
\usepackage{multirow}
\usepackage{color}
 \usepackage[english]{babel}
\usepackage{graphicx}
\usepackage{float}
\usepackage{xcolor}
 \usepackage{amsmath}
\usepackage{graphicx}
\usepackage{multicol}
\usepackage{amsmath}
\usepackage{subeqnarray}
\usepackage{url}
\usepackage[bottom]{footmisc}

 \usepackage{amsthm}
\usepackage{relsize}
\usepackage{bm}
\usepackage{yfonts}
\usepackage[mathcal]{euscript}
\usepackage{url}
\usepackage{etex}

\usepackage{braket}
\usepackage{tikz-cd}

\usepackage{verbatim}

\usepackage{graphicx}
\DeclareMathOperator{\ii}{{\bf i}}

\newtheorem{thm}{Theorem}[section]
\newtheorem{cor}[thm]{Corollary}
\newtheorem{prop}[thm]{Proposition}
\newtheorem{lem}[thm]{Lemma}

\newtheorem{application}{Application}

\theoremstyle{definition}

\newtheorem{rem}{Remark}

\modulolinenumbers[5]
\usepackage{psfrag}
\usepackage[inline]{enumitem}   
\makeatletter
\newcommand{\inlineitem}[1][]{%
\ifnum\enit@type=\tw@
    {\descriptionlabel{#1}}
  \hspace{\labelsep}%
\else
  \ifnum\enit@type=\z@
       \refstepcounter{\@listctr}\fi
    \quad\@itemlabel\hspace{\labelsep}%
\fi}
\makeatother
\begin{document}


\begin{frontmatter}

\title{Characteristic adjacency matrix associated with a hypergraph.}

\author[A]{Alain Bretto}
\ead{alain.bretto@unicaen.fr}
\author[B]{Alain Faisant}
\ead{faisant@univ-st-etienne.fr}
\address[A]{Normandie Univ-Caen, GREYC CNRS UMR-6072, Campus II, Bd Marechal Juin BP 5186, 14032 Caen cedex 5, France}
\address[B]{Univ Lyon, UJM-Saint-\'Etienne, CNRS, ICJ UMR 5208, 42023 Saint-\'Etienne, France}






\begin{abstract}
We define a new matrix associated with a hypergraph. A study of this matrix is carried out, in particular the study of its spectrum, which shows the relevance of the construction.
 We demonstrate that this adjacency matrix is characterisitic of the hypergraph, that is to say, two isomorphic hypergraphs have similar matrices. Furthermore, starting from this matrix, we can reconstruct the hypergraph. Starting from this matrix, we introduce new graphs associated with the hypergraph.
\end{abstract}

\begin{keyword}
 Linear algebra\sep   hypergraph theory  \sep  spectrum of a hypergraph \sep matrix associated with a hypergraph.\\
 2020 MSC:  05C65, 05C50.
\end{keyword}

 \end{frontmatter}

\section{Introduction}\label{section1}
Algebraic graph theory is a field that has been widely studied for several decades,  \cite{Bigg1974,Godsil2001}. This method yields elegant and particularly relevant results regarding the properties of the 
graph being studied. An important part of this study is the spectral theory of graphs,  \cite{Bapat2010,bookBretto2022}. This involves studying the adjacency matrix and the matrices constructed from it, such as, for example, the Laplacian.
The value of studying the adjacency matrix lies in the fact that it fully characterizes the graph and therefore provides information that allows for an intrinsic analysis of that graph. \\

Algebraic hypergraph theory is much more recent \cite{Shetty2025}; this approach to hypergraphs introduces several problems that do not exist in graph theory. The main obstacle is that the adjacency matrix of a hypergraph is not characteristic of it, \cite{bookHyperBretto2026}. However, most of the matrices studied in the context of graph theory come from the adjacency matrix and this is also the case in the context of hypergraphs. Therefore, the information conveyed by these matrices is not as precise as in the case of graphs. To address this problem, several authors have introduced the concept of tensors or hypermatrices. \cite{Cooper2012,Hou2019,Liu2016,Cardoso2019,Shetty2025}. 
These algebraic tools yield interesting results, and the eigenvalues provide valuable information that sheds light on the combinatorial properties of hypergraphs.\\

While these representations are relevant, they raise other problems that are very difficult to solve. One of the major drawbacks of studying tensor (hypermatrix) hypergraph theory is the complexity class of the eigenvalue computation, which is NP-hard, \cite{BANERJEE2021,Hillar2013}. Moreover, most tensor properties belong to this complexity class (NP-hard), \cite{Hillar2013}. Consequently, particularly for applications,\cite{Kurian2024}, it is quite difficult to use these mathematical results in the design of efficient algorithms.\\

In this article we introduce a new definition of hypergraphs, which will allow us to define a new adjacency matrix whose rows (resp. columns) are indexed by couples :(vertex, hyperedge). Clearly, this type of matrix allows for the unique reconstruction of the associated hypergraph; moreover, the Theorem \ref{characteristionHypergraph}  shows that this matrix representation fully characterizes the hypergraph. A study of the associated spectrum is conducted, thus demonstrating the relevance of the information conveyed by this matrix. Using this matrix, we introduce new graphs associated with the hypergraph and show that they are strongly linked to the combinatorial and topological nature of the hypergraph under study.

\section{definitions}
We remind the reader that a  \emph{hypergraph} $H$ on a set $V$  is a triplet  $H = (V; E, \varepsilon)$  where

\begin{description}
\item[-] $V$ is a set called \index{vertex} \emph{set of vertices};
\item[-] $E$ is a set  called \index{hyperedge} \emph{set of hyperedges};
\item[-] $\varepsilon: E\longrightarrow P(V)=2^V$, ($ P(V)$ is the set of subsets -power set- of $V$) is a function called \index{incidence function} \emph{incidence function} which associates to any hyperedge $e\in E$ the set of vertices of $e$.
\end{description}

We can give, now  another definition  of a \emph{hypergraph}: Let $H=(V;E,\varepsilon)$ be a hypergraph; we can define $H$ as follows:
\[
 H:=V\ast E= \{(x, e)\in V\times E: x\in \varepsilon(e)\};
 \]
(sometimes $x \ast e$ is written for $(x, e) \in V \ast E$).This set provides all the information about the hypergraph, in particular:
\begin{itemize}
\item[-] $\forall e \in E: e$ is empty if $\forall x \in V$ we have $(x,e) \not \in V \ast E$;
\item[-] $\forall x \in V: x$ is isolated if $\forall e  \in E$ we have $(x,e) \not \in V \ast E$.
\end{itemize}
Conversly let $A \subseteq V \times E$: define $H=(V;E, \varepsilon)$ with for $e\in E$,  $\varepsilon(e):=\{x \in V:\\
(x,e)\in A\}: H$ is a hypergraph such that $V \ast E =A$.
In this section we suppose $H=(V; E, \varepsilon)$ without isolated vertex, and $\varepsilon(e) \neq \emptyset$ so that all vertices and hyperedges are present in the set $V *E$.

The   \emph{star} $H(x)$ centered in $x$ is the set  of hyperedges $e \in E$ "containing" $x$, 
i.e.
$$ H(x)=\{e \in E : x \in \varepsilon(e) \}$$

\noindent  The parameter $d(x) = \vert H(x)\vert$  is the  \emph{degree} of $x$. The maximal degree of a hypergraph $H$ is denoted by $\Delta (H)$ and the minimal degree is denoted by $\delta (H)$. \\
We will  note by ${\displaystyle d(e)=\sum_{x\in e}(d(x)-1)}$ the \index{degree of a hyperedge} \emph{degree of a hyperedge}.\\
We will define the \emph{dual} $H^{*}= (V^*; E^*, \varepsilon^{*})$, ($V^*:=E$ and $E^*:=V$), by
\noindent \[
V^*\ast E^*:= \{(X_{e}, E_{x}):X_{e}\in \varepsilon^{*}(E_{x}), \; \textrm{iff}\; \varepsilon(e)\ni x\}.
\]

\section{ The adjacency  matrix $A_H$ }

\begin{rem}
For convenience, in this section  we denote $VE = \{xe, x \in \varepsilon(e) \}$ instead of $V*E$ and $x*e$ respectively.

\noindent We consider $VE$ as an orthonormal basis of the Hermitian  $\mathbb{C}$-vector space: $<VE>$ of dimension $m_E = \sum_{e \in E}\vert \varepsilon(e)\vert$.

\noindent Therefore, an element $\xi \in <VE>$ is written as:
\[
\xi = \sum_{xe \in VE} \alpha_{xe}x e,\; \textrm{with}\; \alpha_{xe} \in \mathbb{C}
\]
Let ${\displaystyle \eta = \sum_{xe}\beta_{xe}}xe$ be an element of the Hermitian ${\displaystyle \mathbb{C}}$-vector space ${\displaystyle \braket{VE}}$;

\noindent we have:
\[
\braket{\xi \vert\eta}= \sum_{xe} \alpha_{xe} \overline{\beta}_{xe}, \; \textrm{et}\; ||\xi ||^2=\sum_{xe} \alpha_{xe} \overline{\alpha}_{xe}
\]
\end{rem}

\subsection{ Endomorphism $\mathfrak{A}$ and adjacency matrix $A_H$ associated with a hypergraph}
\subsubsection{Order associated to a Hypergraph}

Let's choose an order on $V$ and $E$: $V=\{x_1, \ldots, x_n\}, E=\{e_1, \ldots , e_m\}$; from these orders we can order $VE$ by reverse lexocographic order ($VE^{rl}$):
 \[
 xe \le x'e' \;\;\textrm{ iff}\;\;\; e < e',\;  (e=e_i, e'=e_j,  i<j),\;  \textrm{or } \;\;\;  e=e'  \; \textrm{and} \;\; x\le x',\; ( x=x_i, x'=x_j,  i \le j).
 \]
We can also order $VE$ lexicographically:
\[
xe \le x'e'\;\; \textrm{iff} \; x<x'\;\; \textrm{or}\; x=x' \; \; \textrm{and}\; e \le e'
\]
 we denote $VE^{l}$ this ordered set.\\
 
\noindent  {\bf Eaemple:}   In the hypergraph of Figure \ref{matriceHypergarph}, the order of the vertices and hyperedges is respectively $V=\{x_1, x_2, x_3, x_4, x_5 \}$ and $E=\{e_1, e_2, e_3 \}$. 
Therefore, the reverse lexicographic order of $VE$ is:

\[ 
x_{1}e_1; x_{2}e_1; x_{3}e_1; x_{1}e_{2}; x_{4}e_{2}; x_{3}e_{3}; x_{5}e_{3}
\]
and the lexicographic order of $VE$ is:
\[ 
x_{1}e_1; x_{1}e_2; x_{2}e_1; x_{3}e_{1}; x_{3}e_{3}; x_{4}e_{2}; x_{5}e_{3}
\]

\subsubsection{Endomorphism associed to a hypergraph}
Define $\mathfrak{A}_H=\mathfrak{A} \in \mathrm{End}( \braket{VE})$ on the basis $VE$ by the rule:

\begin{equation}
\mathfrak{A}(xe)=\sum_{y \neq x} ye +\sum_{a > e} \ii xa  -\sum_{a < e} \ii xa
\end{equation}

\begin{rem}
For convenience and when there is no ambiguity, we will write:
\[
\mathfrak{A}(xe)=\sum_{y \neq x} ye +\sum_{a \neq e} \pm \ii xa
\]
\end{rem}
The coefficients of $\mathfrak{A}(xe)$ are Gaussian integers: elements of $\mathbb{Z}[\ii]$.\\

\noindent  A square matrix can be associated with this endomorphism.
\subsubsection{Endomorphism matrix}
The matrix $A_H=A$ of $\mathfrak{A}$ in the basis $VE$ is $A=\left(a_{  x_ie_j, x_{i'}e_{j'}    }\right) $ with

\[
a_{ xe, x'e'}=\begin{cases}
1 \; \;\;\textrm{if}\;\;\; x\neq x' \wedge e=e';\\
0 \;\;\;\textrm{if} \; \;\; x=x'\wedge e=e';\\
+\ii \; \;\; \textrm{if} \;\;\;x=x'\wedge  e< e';\\
-\ii\;\;\; \textrm{if}\;\;\;x=x'\wedge  e> e';\\
0 \; \;\; \textrm{otherwise}

\end{cases}
\]

We will denote $A_H$  the \emph{adjacency matrix} \index{adjacency matrix}of $H$

 \begin{figure}[!ht]

\psfrag{1}{$x_{1}$}
\psfrag{2}{$x_{2}$}
\psfrag{3}{$x_{3}$}
\psfrag{4}{$x_{4}$}
\psfrag{5}{$x_{5}$}
\psfrag{a}{$e_{1}$}
\psfrag{b}{$e_{2}$}
\psfrag{c}{$e_{3}$}
\centering
\includegraphics[width=5cm]{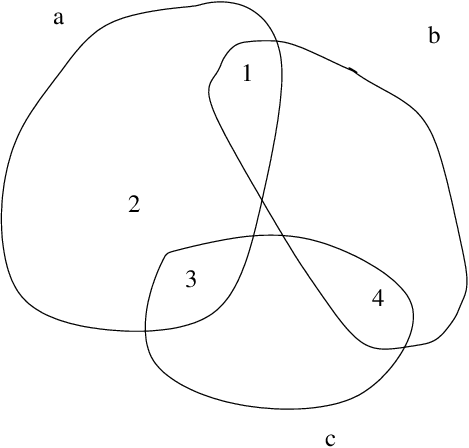}
\caption{The matrix below is the adjacency matrix $A_H$ of the hypergraph $H$ in this figure: $ V=\{x_1, x_2, x_3, x_4, x_5 \}, E=\{e_1, e_2, e_3 \}$. }
\label{matriceHypergarph}
\end{figure}
\bordermatrix{& x_{1}e_1 & x_{2}e_1&x_{3}e_1&x_{1}e_{2}&x_{4}e_{2}&x_{3}e_{3}&x_{4}e_{3}
\cr  x_{1}e_1  & 0 & 1 & 1& +\ii& 0 & 0 & 0   
 \cr x_{2}e_1 & 1 & 0 & 1 & 0 & 0 & 0 & 0 
 \cr x_{3}e_1 & 1 & 1 & 0 & 0 & 0 &+ \ii & 0
 \cr x_{1}e_{2}& -\ii& 0 & 0 & 0 & 1 & 0 & 0
 \cr x_{4}e_{2}& 0 & 0 & 0 & 1 & 0 & 0 & +\ii
 \cr x_{3}e_{3}& 0 & 0 & -\ii & 0 & 0 & 0 & 1
 \cr x_{4}e_{3}& 0 & 0 & 0 & 0 & -\ii & 1 & 0}
\mbox{}\\  

 
 \section{Basic properties of endomorphism $\mathfrak{A}$ and adjacency matrix $A_H$ }
 This section presents the initial results derived from the endomorphism $\mathfrak{A}$ and its associated matrix $A_H$.
  
\begin{prop}\label{resultgeneral}\mbox{}

\begin{itemize}
\item[i)] The endomorphism $\mathfrak{A}_H$ and the matrix $A_H$ are hermitian ( $\overline{A^t}=A$) so are diagonalizable, with real eigenvalues;
\item[ii)] Each $e \in E$ corresponds to a maximum principal " $1$-block" along the diagonal: it is a square Hermitian submatrix of dimension
$\vert \varepsilon(e)\vert \times \vert\varepsilon(e)\vert$ composed of $0$ on the diagonal and $1$ elsewhere;
\item[iii)]   let ${\displaystyle (x, e)}$ be a  row  (resp. the column) of $A(H)$,  the number of $1$ on the row ${\displaystyle (x, e)}$ (resp. on the column ${\displaystyle (x, e)}$) is ${\displaystyle \vert e\vert -1}$
and the number of  $\ii$ on the row ${\displaystyle (x, e)}$ (resp. on the column ${\displaystyle (x, e)}$) is ${\displaystyle d_{H}(x)-1}$;
\item[iv)] Similarly, the matrix $A_H^{l}$ of $\alpha$ in the ordered basis $VE^{l}$ is Hermitian, and to each $x \in V$ corresponds a maximal principal Hermitian submatrix of "$\ii$-block" of 
dimension $d(x) \times d(x)$ composed of $0$ on the diagonal, $+\ii$ above the diagonal, and $-\ii$ below the diagonal.
\end{itemize}
\end{prop}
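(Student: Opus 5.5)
The plan is to work directly from the entrywise description of $A=A_H$ in the basis $VE^{rl}$ and check each item by case analysis on pairs of basis elements $xe, x'e'$.

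For i), I will show $a_{x'e',xe}=\overline{a_{xe,x'e'}}$ case by case. If $e=e'$, the entries are $1$ for $x\neq x'$ and $0$ for $x=x'$, so both are real and symmetric in the two indices. If $x=x'$ and $e<e'$, then $a_{xe,x'e'}=+\ii$ while $a_{x'e',xe}=-\ii$, because the roles of $e$ and $e'$ are swapped; these are conjugates. If $x\neq x'$ and $e\neq e'$, both entries are $0$. Hence $\overline{A^t}=A$. Since the basis $VE$ is orthonormal, $\mathfrak{A}$ is self-adjoint, i.e. $\braket{\mathfrak{A}\xi\vert\eta}=\braket{\xi\vert\mathfrak{A}\eta}$. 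The spectral theorem for Hermitian operators then gives unitary diagonalizability with real eigenvalues.

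For ii), in the reverse lexicographic order the basis vectors $xe$ with fixed $e$ form a consecutive interval of length $\vert\varepsilon(e)\vert$. The principal submatrix on this interval has $0$ on the diagonal and $1$ elsewhere, by the first two cases of the definition. For maximality, note that every entry linking $xe$ to $x'e'$ with $e'\neq e$ is either $0$ or $\pm\ii$, never $1$. So the block of $1$'s cannot be enlarged, since the neighbouring rows and columns belong to other hyperedges.

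For iii), count the nonzero entries in row $xe$. The entries equal to $1$ are the $x'e$ with $x'\in\varepsilon(e)\setminus\{x\}$, giving $\vert e\vert-1$ of them. The entries equal to $\pm\ii$ are the $xa$ with $a\in H(x)\setminus\{e\}$, giving $d_H(x)-1$ of them. The statement for columns follows from i), since a column is the conjugate of the corresponding row.

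For iv), let $P$ be the permutation matrix from the ordered basis $VE^{rl}$ to $VE^{l}$. Then $A^l=P^tAP$, which is unitarily similar to $A$ and hence Hermitian; equivalently, the entrywise formula is unchanged by the reordering. In $VE^l$ the elements $xe$ with fixed $x$ are consecutive and ordered by increasing $e$. So the principal block for $x$ has size $d(x)$, with $0$ on the diagonal, $+\ii$ above it (where $e<e'$) and $-\ii$ below it. It is maximal for the same reason as in ii): entries between different vertices are $0$ or $1$, never $\pm\ii$.

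I expect no step to be a real obstacle. The only point that needs care is making precise what "maximal" means in ii) and iv), namely that the block is the full set of indices sharing $e$ (resp. $x$) and cannot be extended by a further consecutive index.
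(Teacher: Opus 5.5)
Your proposal is correct and follows the same route as the paper. Both arguments verify each item directly from the entrywise definition of $a_{xe,x'e'}$: conjugate symmetry for i), counting the $1$'s and $\pm\ii$'s for iii), and the fact that indices with fixed $e$ (resp. fixed $x$) are consecutive in $VE^{rl}$ (resp. $VE^{l}$) for ii) and iv). You supply the case analysis and maximality argument that the paper leaves implicit, and you rightly read ``number of $\ii$'' in iii) as the number of entries equal to $\pm\ii$.
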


\begin{proof}
\mbox{}
\begin{itemize}
\item[-] Indeed, because $a_{x'e',xe}= \overline{a}_{xe,x'e'}$;
\item[-]  assertion iii) comes from the construction of the matrix;
\item[-] Assertions ii) and iv) come from the reverse lexicographic order and the lexicographic order on $VE$, respectively.
\end{itemize}

\end{proof}

\begin{rem}\label{remSimilar}
Changing the order of V and E isn't really important;  indeed:

\begin{itemize}
\item[-] the change of order on $V=\{ x_{i_1}, \ldots , x_{i_k}, \ldots x_{i_n} \}$ corresponds to a bijection $\sigma (i_{k}) \in \{1, \ldots , n \}, 1 \le k \le n$;
\item[-] the change of order on $E=\{e_{j_1} \ldots , e_{j_l}, \ldots e_{j_m} \}$ corresponds to a bijection $\tau(j_{l}) \in \{1, \ldots, m\}, 1 \le l \le m$;
\item[-] The function $f : H \longrightarrow H$ defined by $f(x_{i_k})= x_{\sigma(i_k)}$ and $f(e_{j_l})=e_{\tau(j_{l})}$ is an automorphism of $H$ and gives rise t
o $\alpha'= f \circ \alpha \circ  f^{-1}$, which is a conjugate of $\alpha$ and the matrix $A'_{H}$ of $\alpha'$ is similar to the matrix $A_{H}$ of $\alpha$.
\end{itemize}
\end{rem}
Matrix below is the matrix $A_H^{l}$ from the figure \ref{matriceHypergarph}, this one  illustrates  Remark \ref{remSimilar}

\bordermatrix{& x_{1}e_1 & x_{1}e_2&x_{2}e_1&x_{3}e_{1}&x_{3}e_{3}&x_{4}e_{2}&x_{4}e_{3}
\cr  x_{1}e_1  & 0 & +\ii& 1& 1& 0 & 0 & 0   
 \cr x_{1}e_2 & -\ii & 0 & 0 & 0 & 0 & 1 & 0 
 \cr x_{2}e_1 & 1 & 0 & 0 & 1 & 0 &0 & 0
 \cr x_{3}e_{1}& 1 & 0 & 1 & 0 & +\ii& 0 & 0
 \cr x_{3}e_{3}& 0 & 0 & 0 & -\ii& 0 & 0 & 1
 \cr x_{4}e_{2}& 0 & 1 & 0 & 0 & 0 & 0 & +\ii
 \cr x_{4}e_{3}& 0 & 0 & 0 & 0 & 1 & -\ii & 0}
\mbox{}\\\\  

\noindent We can see in this example that the $\ii$-blocks correspond to the vertices $x_1, x_2, x_3, x_4, x_5$ and the dimension of the block $x_i$ is $d(x_i)$.

  
\begin{prop}\label{constructionMatriceChap7}\mbox{}\\
Let $A_H$ be the matrix of $\mathfrak{A}_H$ in the basis $VE$; then the matrix of $\mathfrak{A}_{H*}$ in the basis $V^*E^*$ is obtained from $\mathfrak{A}_H$ by applying the following rules:

\begin{itemize}
\item replace $\ii$ and $-\ii$ with $1$;
\item replace each $1$ below the diagonal with a $-\ii$;
\item replace each $1$ above the diagonal with a $+\ii$.
\end{itemize}
\end{prop}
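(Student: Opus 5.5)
The plan is to make the natural identification of bases explicit and then compare the two matrices entry by entry. By the definition of the dual, $V^*=E$ and $E^*=V$, and the pair $(X_e,E_x)$ lies in $V^*\ast E^*$ iff $x\in\varepsilon(e)$. So the map $\phi: xe\mapsto X_eE_x$ is a bijection $VE\to V^*E^*$. It sends the orthonormal basis to the orthonormal basis, hence defines a unitary isomorphism $\langle VE\rangle\to\langle V^*E^*\rangle$. I would compute the matrix of $\phi^{-1}\circ\mathfrak{A}_{H^*}\circ\phi$ in the basis $VE$ and compare it with $A_H$ position by position.

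First I apply the defining formula of $\mathfrak{A}$ to $H^*$. For a basis element $X_eE_x$, the "vertex" is $X_e$ and the "hyperedge" is $E_x$, so
\[
\mathfrak{A}_{H^*}(X_eE_x)=\sum_{e'\neq e} X_{e'}E_x+\sum_{x'>x}\ii\, X_eE_{x'}-\sum_{x'<x}\ii\, X_eE_{x'},
\]
where the order on $E^*=V$ is the order $x_1<\dots<x_n$ already chosen. Pulling back by $\phi$ gives the following entries at position $(xe,x'e')$:
\begin{itemize}
\item $1$ when $x=x'$ and $e\neq e'$; in $A_H$ these entries are $\pm\ii$.
\item $+\ii$ when $e=e'$ and $x<x'$, and $-\ii$ when $e=e'$ and $x>x'$; in $A_H$ these entries are $1$.
\item $0$ on the diagonal and everywhere else, exactly as in $A_H$.
\end{itemize}
This already yields the first rule: every $\pm\ii$ of $A_H$ becomes $1$.

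The second and third rules need the words "above/below the diagonal" to be matched with $x<x'$ versus $x>x'$ for the former $1$-entries, which sit inside the $1$-blocks of Proposition \ref{resultgeneral} ii). Within a fixed hyperedge $e$, the elements $xe$ are ordered by the order on $V$, both in $VE^{rl}$ and in $VE^{l}$. Hence a $1$ at position $(xe,x'e)$ lies above the diagonal iff $xe$ precedes $x'e$, i.e.\ iff $x<x'$. There it is replaced by $+\ii$, and by $-\ii$ when $x>x'$, as claimed.

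Finally I would address the ordering of the basis $V^*E^*$. Its own reverse lexicographic order (first by the dual hyperedge $E_x$, then by the dual vertex $X_e$) corresponds under $\phi$ exactly to the lexicographic order $VE^{l}$. So the matrix obtained by the rules, written in $VE^{l}$, is literally the matrix $A_{H^*}$ in the basis $V^{*}E^{*rl}$. Written in $VE^{rl}$, it differs from $A_{H^*}$ by a permutation similarity, which is harmless by Remark \ref{remSimilar}. This bookkeeping of which order is meant, so that "above the diagonal" is unambiguous, is the only delicate point. The observation that both orders restrict to the order of $V$ inside each hyperedge is what resolves it.
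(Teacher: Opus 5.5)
Your proposal is correct and follows the paper's own one-line argument. The identification $xe\leftrightarrow X_eE_x$, forced by $V^*=E$ and $E^*=V$, swaps the roles of the $1$-entries and the $\pm\ii$-entries, and you carry out the entry-by-entry comparison that the paper leaves implicit. Your handling of the basis order is also sound. It matches the paper's worked example, where the rules are in fact applied to $A_H^{l}$ and yield the matrix of $\mathfrak{A}_{H^*}$ in the reverse lexicographic order of $V^*E^*$.
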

\begin{proof}
\mbox{}
Indeed, since  $V^*:=E$ and $E^*:=V$, by definition.
\end{proof}
The dual of Figure \ref{matriceHypergarph} is shown in the figure below:

\begin{figure}[!ht]
\psfrag{b}{$X_{e_{1}}$}
\psfrag{c}{$X_{e_{2}}$}
\psfrag{d}{$X_{e_{3}}$}
\psfrag{A}{$E_{x_{1}}$}
\psfrag{B}{$E_{x_{4}}$}
\psfrag{C}{$E_{x_{3}}$}
\psfrag{D}{$E_{x_{2}}$}
\psfrag{F}{$6$}
\psfrag{G}{$7$}
\psfrag{H}{$8$}
\psfrag{K}{$9$}
\psfrag{L}{$10$}
\centering
\includegraphics[width=9cm]{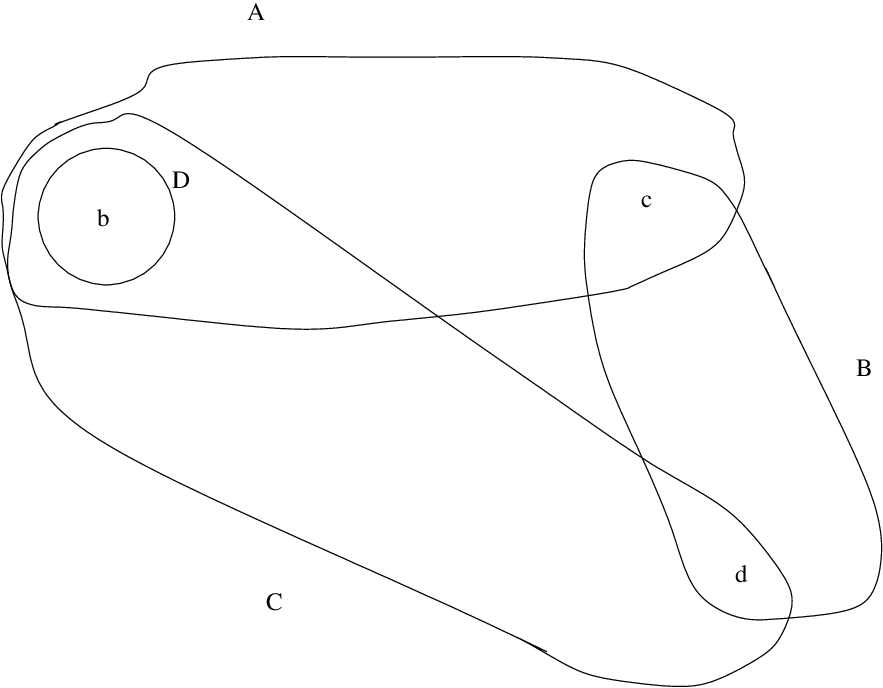}
\caption{This figure shows the dual of the hypergraph of Figure \ref{matriceHypergarph}, the set of vertices is
$V^*:=\{X_{e_{1}}, X_{e_{2}}, X_{e_{3}} \}$ and the set of hyperedges is $E^*:=\{E_{x_{1}}, E_{x_{2}}, E_{x_{3}}, E_{x_{4}}, E_{x_{5}}\}$.}
\label{dualChapitre7}
\end{figure}

The matrix below is that of the hypergraph shown in Figure \ref{dualChapitre7}, it illustrates Proposition \ref{constructionMatriceChap7}.

and the matrix of $\mathfrak{A}_{H^*}$ in basis $V^*E^*=EV$ is \\

\bordermatrix{&e_1x_1 &e_2x_1&e_1x_2&e_1x_3&e_3x_3&e_2x_4&e_3x_4
\cr  e_1x_1  & 0 & 1& +\ii& +\ii& 0 & 0 & 0   
 \cr e_2x_1 & 1& 0 & 0 & 0 & 0 & +\ii& 0 
 \cr e_1x_2 & -\ii & 0 & 0 & +\ii & 0 &0 & 0
 \cr e_1x_3& -\ii & 0 & -\ii & 0 & 1 & 0 & 0
 \cr e_3x_3 & 0 & 0 & 0 & 1& 0 & 0 & +\ii
 \cr e_2x_4 & 0 & -\ii & 0 & 0 & 0 & 0 & 0
 \cr e_3x_4& 0 & 0 & 0 & 0 & -\ii& 0 & 0}
\mbox{}\\\\

\noindent We can check here the previous rule and see the $2$, $1$-blocks of dimension $2\times 2 $ and the $3$, $1$-blocks of dimension $1\times 1$ 
corresponding to the three loops of $H^*$.

 \begin{rem}
 We remind the reader that the numbers $\frac{<\alpha(\xi) | \xi>}{<\xi | \xi>}=R(\mathfrak{A}, \xi)$ are the so-called   \index{Rayleigh quotients} \emph{Rayleigh quotients}. 
 \end{rem}

\begin{prop}\label{firstProperties}

Let $H=(V; E,\varepsilon)$ be a hypergraph, we have the following properties:
\begin{itemize}
\item[i)] $\mathfrak{A}_H \in \mathrm{End}( \braket{VE})$ is diagonalizable , $\mathrm{Spec}( \mathfrak{A}) \subseteq \overline{\mathbb{Q}} \cap \mathbb{R}$ and the proper subspaces $H_{\lambda}$ are orthogonal;
\item[ii)] For all $||\xi||=1$, we have:
\[
 \lambda_{min} \le \braket{\mathfrak{A}(\xi)|\xi} \le \lambda_{max}
\]
with: ${\displaystyle \lambda_{min}= inf_{||\xi||=1}  \braket{\mathfrak{A}(\xi)|\xi}}$ and ${\displaystyle \lambda_{max}= sup_{||\xi||=1}  \braket{\mathfrak{A}(\xi)|\xi} }$;
\item[iii)] If $H=(V; E,\varepsilon)$ has  $e \in E$ such that $|\varepsilon(e)| \ge 2$, then: 
\[
\lambda_{min} \le -1 < +1 \le \lambda_{max};
\]
\item[iv)] We have the following inequalities:
\[
r(H)-1 \le \lambda_{max} \le max_{xe \in VE} \{ d(x)+|\varepsilon(e)|-2 \} \le r(H)-1 + \Delta(H)-1;
\]
\item[v)] ${\displaystyle \sum_{1 \le i \le m_E} \lambda_i =0}$;
\item[vi)] ${\displaystyle \sum_{1 \le i \le m_E} \lambda_i^2 = \sum_{xe\in VE} d(x) + \vert \varepsilon(e)\vert -2}$;
\item[vii)] ${\displaystyle \sum_{xe\in VE} d(x)+ \vert\varepsilon(e)\vert }$ is even.
\end{itemize}
\end{prop}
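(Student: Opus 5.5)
The plan is to read almost every item off two facts: $A_H$ is Hermitian (Proposition \ref{resultgeneral} i)), and the row count of Proposition \ref{resultgeneral} iii). Rayleigh quotients of well-chosen test vectors then give the bounds. Throughout, $r(H)=\max_{e\in E}|\varepsilon(e)|$ is the rank.

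For i), I invoke Proposition \ref{resultgeneral} i) and the spectral theorem. They give that $\mathfrak{A}$ is unitarily diagonalizable with real eigenvalues and mutually orthogonal eigenspaces $H_\lambda$. Algebraicity comes from the characteristic polynomial $\det(tI-A_H)$: it is monic with coefficients in $\mathbb{Z}[\ii]$, so each root is integral over $\mathbb{Z}[\ii]$, hence algebraic over $\mathbb{Q}$. Being real, each root lies in $\overline{\mathbb{Q}}\cap\mathbb{R}$. For ii), I expand a unit vector $\xi=\sum_k c_k u_k$ in an orthonormal eigenbasis. Then $\braket{\mathfrak{A}(\xi)|\xi}=\sum_k\lambda_k|c_k|^2$ is a convex combination of the $\lambda_k$, so it lies in $[\lambda_{min},\lambda_{max}]$. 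Taking $\xi$ to be eigenvectors shows that the inf and sup are attained and equal the extreme eigenvalues.

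For iii), pick $x\neq y$ in $\varepsilon(e)$ and set $\xi_\pm=(xe\pm ye)/\sqrt2$. The only entries involved are $a_{xe,ye}=a_{ye,xe}=1$ and the zero diagonal, so $\braket{\mathfrak{A}(\xi_\pm)|\xi_\pm}=\pm1$. By ii) this gives $\lambda_{min}\le -1$ and $\lambda_{max}\ge 1$.

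For the lower bound in iv), take $e$ with $|\varepsilon(e)|=r(H)$ and $\xi=|\varepsilon(e)|^{-1/2}\sum_{x\in\varepsilon(e)}xe$. The $\pm\ii$ entries join $xe$ only to $xa$ with $a\neq e$, which lie outside the support of $\xi$. Hence only the $1$-block of Proposition \ref{resultgeneral} ii) contributes, and the Rayleigh quotient equals $r(H)-1$. For the upper bound I use Gershgorin's theorem, or equivalently the bound by the maximal absolute row sum. By Proposition \ref{resultgeneral} iii), row $xe$ has $|\varepsilon(e)|-1$ entries equal to $1$ and $d(x)-1$ entries of modulus $1$. So every eigenvalue is at most $\max_{xe}\{d(x)+|\varepsilon(e)|-2\}$, and the final inequality follows from $|\varepsilon(e)|\le r(H)$ and $d(x)\le\Delta(H)$.

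For v), $\sum\lambda_i=\mathrm{tr}(A_H)=0$ because the diagonal is zero. For vi), since $A_H$ is Hermitian,
\[
\sum\lambda_i^2=\mathrm{tr}(A_H^2)=\mathrm{tr}(A_H\overline{A_H}^t)=\sum_{xe,x'e'}|a_{xe,x'e'}|^2 ,
\]
and by the row count the $xe$ row contributes $d(x)+|\varepsilon(e)|-2$. For vii), the same double sum counts the nonzero entries of $A_H$. These come in pairs $(xe,x'e'),(x'e',xe)$ off the diagonal, so the sum is even. Adding the even number $2m_E$ shows that $\sum_{xe}(d(x)+|\varepsilon(e)|)$ is even.

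I expect the only delicate points to be two things. The first is checking that the test vector in iv) really sees no $\ii$-entries. The second is making sure that $r(H)$ is interpreted as the rank, which the displayed inequality chain requires. Everything else is standard linear algebra.
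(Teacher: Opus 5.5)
Your proof is correct. For items i)--vi) it is the paper's argument: the spectral theorem, the convex-combination expansion in an orthonormal eigenbasis, the test vectors $(xe\pm ye)/\sqrt2$ and $\sum_{x\in\varepsilon(e)}xe$, a row-sum bound for $\lambda_{max}$, and the traces of $A_H$ and $A_H^2$. For the row-sum bound, the paper argues directly from a maximal-modulus coordinate of an eigenvector, which is exactly the proof of the Gershgorin bound you cite. Your integrality argument for $\mathrm{Spec}(\mathfrak{A})\subseteq\overline{\mathbb{Q}}\cap\mathbb{R}$ also fills in what the paper only calls classical.

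The one genuine difference is vii). The paper argues spectrally. Since $A_H$ is Hermitian with entries in $\mathbb{Z}[\ii]$, the coefficient $c_2=\sum_{i<j}\lambda_i\lambda_j$ of its characteristic polynomial lies in $\mathbb{Z}[\ii]\cap\mathbb{R}=\mathbb{Z}$. By v), $2c_2=(\sum\lambda_i)^2-\sum\lambda_i^2=-\sum\lambda_i^2$, so vi) makes $\sum_{xe}(d(x)+|\varepsilon(e)|-2)$ even. You instead count the nonzero entries of $A_H$ and pair them by $(xe,x'e')\leftrightarrow(x'e',xe)$. In effect you observe that this sum is $2|U|$, twice the number of edges of the associated graph $\Gamma_H$, which is a handshake-lemma argument.

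Your route is more elementary: it uses only the symmetric support and the zero diagonal, and it gives the quantity an explicit combinatorial meaning. The paper's route shows the parity as a consequence of the integrality of the characteristic polynomial, i.e.\ as a spectral fact.

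Your final step, adding $2m_E$, is the right one. The paper's closing formula $\sum d(x)+|\varepsilon(e)|=-2c_2-2$ should read $-2c_2+2m_E$, although its parity conclusion is unaffected.
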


\begin{proof}  \mbox{}
\begin{itemize}
\item[i)] These are classic properties of Hermitian endomorphisms;
\item[ii)] Let $\{ \pi_1, \ldots , \pi_{m_E} \}$ be an orthonormal basis of eigenvectors associated to the $\lambda_i$, $1 \le i \le m_E$, and let $\xi$ be an element such that $||\xi||=1$, then
\[
\begin{split}
\xi&= \sum_{1 \le i \le m_E} u_i \pi_i \braket{\mathfrak{A}{(\xi)|\xi}}\\
& = \braket{\sum u_i \lambda_i \pi_i | \sum u_i \pi_i}= \sum |u_i|^2 \lambda_i 
\le \lambda_{max}; 
\end{split}
\]
and  ${\displaystyle \braket{\mathfrak{A}(\xi)| \xi}\ge \lambda_{min} \sum |u_i|^2= \lambda_{min}}$. \\
The bounds are obtained using the associated eigenvectors.

\item[iii)]  Let's choose $x \neq x'$ in $\varepsilon(e)$ and consider the following vector: ${\displaystyle \xi=\frac{xe}{\sqrt{2}}+\frac{x'e}{\sqrt{2}}}$, we have:
\begin{itemize}
\item[-] $|| \xi ||=1$;
\item[-] \[
\begin{split}
\braket{\mathfrak{A}{(\xi)|\xi}}&= \frac{1}{2}\braket{\sum_{y \neq x}ye + \sum_{a \neq e} \pm \ii xa + \sum_{y \neq x'}ye + \sum_{a \neq e} \pm \ii x'a | xe+x'e}\\
& =\frac{1}{2}\left[\braket{xe|xe}+ \braket{x'e |x'e}\right]=1\le \lambda_{\max}, \textrm{using  ii)}.
\end{split}
\]
\end{itemize}
Now: let $\eta=\frac{xe}{\sqrt{2}}-\frac{x'e}{\sqrt{2}}$; as above, we have:
\begin{itemize}
\item[-] $ || \eta||=1$;
\item[-] \[
\begin{split} 
\braket{\mathfrak{A}(\eta) | \eta }&=
\frac{1}{2}\braket{ \sum_{y \neq x}ye + \sum_{a \neq e} \pm \ii xa - \sum_{y \neq x'}ye - \sum_{a \neq e} \pm \ii x'a | xe+x'e} \\
&=\frac{1}{2} \left[- \braket{xe|xe}- \braket{x'e |x'e}\right]=-1 \ge \lambda_{\min},  \textrm{using  ii)}.
\end{split}
\]

\begin{rem}
In the case where $|\varepsilon(e)|=1$ for all $e$, we have $<\mathfrak{A}(\xi) |\xi >=0$ for all $\xi$.
\end{rem}
\end{itemize}
\item[iv)] The upper bound is obtained as follows: consider ${\displaystyle \xi=\sum u_{xe} xe}$ an eigenvector associated with ${\displaystyle \lambda_{max}}$: ${\displaystyle \mathfrak{A}(\xi)= \lambda_{max} \xi}$, we now  choose ${\displaystyle x_0e_0}$ such that ${\displaystyle |u_{x_0e_0}|= max |u_{xe}|}$; now let's calculate the component:\\

${\displaystyle \mathfrak{A}(\xi)_{x_0e_0}= \sum u_{xe} (\sum_{y \neq x}+ \sum_{a \neq e} \pm \ii xa)_{x_0e_0}= \lambda_{\max} u_{x_0e_0}}$; so
\[
\begin{split} 
\lambda_{max} |u_{x_0e_0}| &\le  |u_{x_0e_0}| (\sum_{y \neq x} 1 + \sum_{a \neq e } \vert \pm \ii\vert ) \le |u_{x_0e_0}| \left(d(x_0)-1 + |\varepsilon(e_0)|-1\right)\\
& \le |u_{x_0e_0}| \max \{ d(x)+ |\varepsilon(e)|-2;
\end{split} 
\]
 the upper bound follows from the simplification by ${\displaystyle |u_{x_0e_0}|}$.\\

\noindent Let's now move on to the lower bound: \\
let us fix an element $e \in E$ and consider the vector\\
 ${\displaystyle \eta_e=\sum_{x \in \varepsilon(e)}xe}$: ${\displaystyle\braket{\eta_e | \eta_e}= \braket{\sum_x xe |\sum_{x'}x'e }=\sum 1=
|\varepsilon(e)|}$, and\\
 ${\displaystyle \braket{\mathfrak{A}(\eta)|\eta}=\braket{\sum_x \alpha(xe)| \sum_{x'}x'e}=\sum_{x\in \varepsilon(e)}\braket{ \sum_{y \neq x }ye+\sum_{a \neq e} \pm \ii xa|\sum_{x' \in \varepsilon(e)}x'e}}$.\\ 
 Now we distribute the $x'e$: $x'e$ with $\pm \ii xa$, it  gives $0$; $x'e$ with $ye$ only contribute for $x'\neq x$ and $y=x'$; so we found:\\
 ${\displaystyle = \sum_{x \in \varepsilon(e)} \sum_{x' \neq x, x' \in \varepsilon(e)}1=\sum_{x \in \varepsilon(e)}(| \varepsilon(e)|-1)=
|\varepsilon(e)|(|\varepsilon(e)|-1)}$, therefore\\
 ${\displaystyle R(\mathfrak{A}, \eta_e)=\frac{1}{|\varepsilon(e)|} |\varepsilon(e)|(|\varepsilon(e)|-1)= |\varepsilon(e)|-1 \le \lambda_{max}}$ and ${\displaystyle r(H) -1\le \lambda_{max}}$.\\
\item[v)]Indeed, since the diagonal of $A_H$ is zero.
\item[vi)] We have ${\displaystyle a_{xe,x'e'}.a_{x'e',xe}= a_{xe,x'e'} \overline{a}_{xe,x'e'}=1}$ so the diagonal component $(xe,xe)$ of $A_H^2$ is the sum
${\displaystyle Tr A_H^2= \sum_{x'e'} d(x) +|\varepsilon(e)| -2}$.
\item[vii)] Let $\Phi(x)= det (x I -A_H) \in \mathbb{Z}[\ii][x] $ be the characteristic polynomial of $A_H$: the coefficient of $x^{m_E-2} $ is
${\displaystyle c_2=\sum \lambda_i \lambda_j \in \mathbb{Z}[\ii] \cap \mathbb{R}= \mathbb{Z}}$ and\\
 $ {\displaystyle 2c_2=(\sum \lambda_i)^2 - \sum \lambda_i^2= - \sum\lambda_i^2= - \sum d(x)+|\varepsilon(e)| -2}$\\
 and therefore: $ {\displaystyle \sum d(x)+|\varepsilon(e)|= -2c_2-2}$.
\end{itemize}

\end{proof}

\begin{rem} 
If a loop exists, then $\lambda=0 \in \textrm{Spec}(\alpha)$.
\end{rem}
\subsubsection*{Example}
In the hypergraph shown in Figure \ref{matriceHypergarph}, we have:
\begin{description}
\item[-] the number of vertices is $m_E=7$;
\item[-] the characteristic polynomial is:  $P(x)=-x^7+8x^5+2x^{4}-17x^3-6x^2-8x+2$;
\item[-] the spectrum $\textrm{Spec}(\mathfrak{A})$  is:\\
 \[
 \begin{split}
  \lambda_{\max} \simeq 2.382 &> \lambda_2 \simeq 1.601 > \lambda_3 \simeq 0.740 > \lambda_4 \simeq -0.236 > \lambda_5 \simeq  \\
  & -1.000 >\lambda_6 \simeq -1.545 > \lambda_{\min} \simeq -1.942.
 \end{split}
 \]
\end{description}
We can easily verify that ${\displaystyle \lambda_{max} \le max\{ d(x)+|\varepsilon(e)| -2 \}=3}$ and that\\
 ${\displaystyle \frac{1}{m_E} \sum_{e \in E} |\varepsilon(e)|(|\varepsilon(e)|-1)=10/7 \le \lambda_{\max}}$.

\begin{cor}
Let $H=(V; E,\varepsilon)$ be a hypergraph, then:

\begin{description}
\item[i)]  If $H'=(V';E', \varepsilon ')$ is the induced hypergraph on $V' \subseteq V $ and $H"=(V'; E", \varepsilon")$ obtained by removing empty hyperedges of $H'$, then
\[
 \lambda_{\min}(H) \le \lambda_{\min}(H") \le \lambda_{\max}(H") \le \lambda_{\max}(H)
 \]
\item[ii)]  If $H'=(V';E', \varepsilon ')$ is the partial hypergraph generated by $E'$: $V'= \cup_{e \in E'} \varepsilon(e), 
\varepsilon'=\varepsilon|_{E'}$) then
\[
\lambda_{\min}(H) \le \lambda_{\min}(H') \le \lambda_{\max}(H') \le \lambda_{\max}(H)
\]
\end{description}
\end{cor}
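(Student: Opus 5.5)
The plan is to show that in both cases $A_{H''}$ (resp. $A_{H'}$) is, up to the similarity of Remark \ref{remSimilar}, a principal submatrix of $A_H$. The inequalities then follow from the Rayleigh quotient characterisation in Proposition \ref{firstProperties} ii), applied to vectors supported on a subset of the basis.

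First we identify the basis. For i), the induced hypergraph has $\varepsilon'(e)=\varepsilon(e)\cap V'$. After removing empty hyperedges, the basis of $H''$ is
\[
V'E''=\{xe\in VE : x\in V'\},
\]
which is a subset of $VE$. For ii), the basis of $H'$ is
\[
V'E'=\{xe\in VE : e\in E'\},
\]
since $\varepsilon'=\varepsilon|_{E'}$ and every vertex of $V'$ lies in some $e\in E'$. In both cases we order $V'$ and $E''$ (resp. $E'$) by the orders restricted from $V$ and $E$. By Remark \ref{remSimilar}, this choice does not affect the spectrum.

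The key step is to compare entries. For two basis elements $xe,x'e'$ of the subspace, the entry of the new matrix is defined by the same case distinction as for $A_H$: $x\neq x'$ with $e=e'$, or $x=x'$ with $e<e'$ or $e>e'$. These conditions involve only the identities of $x,x',e,e'$ and the restricted orders, so they are unchanged. Hence the new matrix equals the principal submatrix of $A_H$ indexed by the subset, that is $\mathfrak{A}_{H''}=P\,\mathfrak{A}_H|_{\langle V'E''\rangle}$, where $P$ is the orthogonal projection onto $\langle V'E''\rangle$.

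Now let $\xi\in\langle V'E''\rangle\subseteq\langle VE\rangle$ with $\|\xi\|=1$. Then
\[
\braket{\mathfrak{A}_{H''}(\xi)|\xi}=\braket{P\mathfrak{A}_H(\xi)|\xi}=\braket{\mathfrak{A}_H(\xi)|\xi}.
\]
By Proposition \ref{firstProperties} ii), this value lies in $[\lambda_{\min}(H),\lambda_{\max}(H)]$. Taking the infimum and supremum over such $\xi$ and applying ii) to $H''$ gives
\[
\lambda_{\min}(H)\le\lambda_{\min}(H'')\le\lambda_{\max}(H'')\le\lambda_{\max}(H).
\]
Case ii) is identical with $V'E'$ in place of $V'E''$.

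The main obstacle is the identification in the second step. We must check that the signs $\pm\ii$ are preserved, which holds because the order on $E''$ is the restriction of the order on $E$. We must also check that deleting empty hyperedges in i), and the definition of $V'$ in ii), remove no basis element $xe$ with the relevant property. Both hold directly from the definitions of the two bases.
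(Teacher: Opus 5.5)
Your proof is correct and is essentially the paper's argument. The paper pads a unit eigenvector for $\lambda_{\max}(H'')$ (resp.\ $\lambda_{\min}(H'')$) with zeros and compares Rayleigh quotients via Proposition~\ref{firstProperties}~ii), which is exactly your identity $\braket{\mathfrak{A}_{H''}(\xi)|\xi}=\braket{\mathfrak{A}_H(\xi)|\xi}$ for $\xi$ supported on $V'E''$ (resp.\ $V'E'$). Your check that the restricted orders make $A_{H''}$ (resp.\ $A_{H'}$) a principal submatrix of $A_H$ makes explicit what the paper leaves implicit.

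Drop the appeal to Remark~\ref{remSimilar}: your argument does not need it, and it does not hold in general. A reordering of $E$ is not an automorphism of $H$, and it can change the spectrum. Take $e_1=\{a,b\}$, $e_2=\{a,c,d\}$, $e_3=\{b,c,f\}$, $e_4=\{d,f\}$ and exchange $e_3$ and $e_4$ in the order. The weight of the induced $8$-cycle $ae_1,ae_2,de_2,de_4,fe_4,fe_3,be_3,be_1$ of $\Gamma_H$ changes from $1$ to $-1$, so its principal minor goes from $0$ to $4$. No other $8$-minor changes, so the coefficient of $x^2$ in the characteristic polynomial changes by $4$. The corollary should therefore be read with the restricted orders, which is exactly the case your proof covers.
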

\begin{proof}\mbox{}

\begin{description}
\item[i)] In $H'$ there is no isolated vertex, therefore there is no isolated vertex in $H"$. Let $\xi= \sum u_{x'e"} x'e"$ be an eigenvector associated with
$\lambda_{max}(H")$ and $||\xi ||"=1$; we complete $\xi$ in $\eta=\sum v_{xe}xe$ with $0$ for $xe \neq x'e"$; according to the proposition \ref{firstProperties}, we have:
\[
\lambda_{max} (H") = \braket{ \mathfrak{A}"(\xi) |\xi}=\braket{ \ \mathfrak{A}(\eta)|\eta} \le \lambda_{max}(H).
\]
The reasoning is identical for $\lambda_{\min}$
\item[ii)] In $H'$, there is neither an empty hyperedge nor an isolated vertex; the proof is similar.
\end{description}
\qed
\end{proof}

\subsubsection*{Example}
 In the hypergraph shown in Figure \ref{matriceHypergarph}, if we remove $e_3$ and $x_3$ we obtain $H'$ with the adjacency matrix:

 \bordermatrix{& x_{1}e_1 & x_{2}e_1&x_{1}e_2&x_{4}e_{2}
\cr  x_{1}e_1  & 0 & 1 & +\ii& 0
 \cr x_{2}e_1 & 1 & 0 & 0 & 0 
 \cr x_{1}e_2 & -\ii & 0 & 0 & 1 
 \cr x_{4}e_{2}&  0 & 0 & 1 & 0}
 \mbox{}\\\\ 
 
 \noindent We have:  ${\displaystyle m_{E'}=4,\;  P'(x)= -(x^2-x-1)(x^2+x-1)}$ and ${\displaystyle \textrm{Spec}(\mathfrak{A}')}$ is :\\\\
${\displaystyle \lambda'_{\max} \simeq 1,618 > \lambda'_2\simeq 0,618 > \lambda'_3 \simeq -0, 618 > \lambda'_{\min} \simeq -1,618}$ confirming the interlacing. We have:  $\max\{ d(x)+|\varepsilon(e)|-2\}=2$.\\


\begin{cor}
Let $H=(V; E)$ be a $k$-uniform  hypergraph with $k\geq 2$, then:
$$
k-1 \le \lambda_{max}(H) \le \Delta(H)+ k-2 \le \Delta([H]_2)
$$
\end{cor}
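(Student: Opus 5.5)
The plan is to read off the first two inequalities from Proposition \ref{firstProperties} iv) and then to prove the last one by a short counting argument on the $2$-section $[H]_2$.

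For the first two inequalities: since $H$ is $k$-uniform, $|\varepsilon(e)|=k$ for every $e\in E$, so the rank is $r(H)=k$. Proposition \ref{firstProperties} iv) then gives $k-1=r(H)-1\le \lambda_{\max}$ directly. The same item gives
\[
\lambda_{\max}\le \max_{xe\in VE}\{d(x)+|\varepsilon(e)|-2\}=\max_{xe\in VE}\{d(x)\}+k-2 .
\]
Every vertex is non-isolated and so occurs in some pair $xe$, hence this maximum equals $\Delta(H)+k-2$. The hypothesis $k\ge 2$ is what makes the lower bound meaningful, via the vector $\eta_e$ used in that proof (cf.\ also Proposition \ref{firstProperties} iii)).

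For the last inequality $\Delta(H)+k-2\le \Delta([H]_2)$, I would fix a vertex $x$ with $d(x)=\Delta(H)$ and count its neighbours in $[H]_2$ through the star $H(x)$. Each hyperedge $e\in H(x)$ contributes the $k-1$ adjacencies $\{x,y\}$, $y\in\varepsilon(e)\setminus\{x\}$. Counted with multiplicity over the star, as the $2$-section is built hyperedge by hyperedge, this gives $\deg_{[H]_2}(x)=d(x)(k-1)$. The claim then reduces to the elementary inequality
\[
d(x)(k-1)-\bigl(d(x)+k-2\bigr)=(d(x)-1)(k-2)\ge 0 ,
\]
which holds since $d(x)\ge 1$ and $k\ge 2$. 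Hence $\Delta(H)+k-2\le \deg_{[H]_2}(x)\le \Delta([H]_2)$.

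The main obstacle is this last step, and specifically the convention for $\Delta([H]_2)$. If $[H]_2$ is the simple $2$-section, in which distinct hyperedges through $x$ may share their other vertices, the count fails. For example, the hyperedges $\{x,a,b\},\{x,a,c\},\{x,b,c\}$ give $\Delta(H)+k-2=4$ while $x$ has only $3$ distinct neighbours. I would therefore state explicitly that $[H]_2$ is taken as the $2$-section multigraph, with one edge $\{y,z\}$ for each hyperedge containing $y,z$. This is the graph whose adjacency counts match the $1$-blocks of $A_H$ in Proposition \ref{resultgeneral} ii)--iii). Under that convention the multiplicity count above is exact, and the argument goes through.
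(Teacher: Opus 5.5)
Your argument is essentially the paper's. The first two inequalities come from Proposition~\ref{firstProperties}~iv) with $r(H)=|\varepsilon(e)|=k$. The paper handles the last one by the same computation, $\Delta(H)+k-2\le (k-1)\Delta(H)$ (your $(d(x)-1)(k-2)\ge 0$), and then asserts that $\Delta([H]_2)=(k-1)\Delta(H)$ for every $k$-uniform $H$. That assertion is exactly the step you flag, and you are right about it. The paper defines the $2$-section (in the subsection on $\mathcal{R}'$) as a simple graph, and for it one only has $\Delta([H]_2)\le (k-1)\Delta(H)$. Your three triples through $x$ give $\Delta([H]_2)=3<4=\Delta(H)+k-2$, so the statement as written is false and the paper's proof has a genuine gap there.

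The failure also affects the outer bound, not just the intermediate one. Take the complete $3$-uniform hypergraph on $6$ vertices and fix a vertex $x$. The principal submatrix of $A_H$ on the ten pairs $xe$ is $N_{10}$, and the spectrum of $N_n$ is $\{\cot\frac{(2j-1)\pi}{2n}\}_{1\le j\le n}$, so its largest eigenvalue is $\cot(\pi/20)\approx 6.31$. Interlacing then gives $\lambda_{\max}>5=\Delta([H]_2)$. So some change of convention or hypothesis is unavoidable. Your multigraph $2$-section is one valid repair: it makes the paper's identity $\Delta([H]_2)=(k-1)\Delta(H)$ true, and your proof then coincides with the paper's. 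The other natural repair is to assume $H$ linear. Then the sets $\varepsilon(e)\setminus\{x\}$, $e\in H(x)$, are pairwise disjoint, so the simple $2$-section already has $\deg(x)=d(x)(k-1)$.
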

\begin{proof}\mbox{}

${\displaystyle \lambda_{\max} \le  \max\{\ d(x) +|\varepsilon(e)| -2\}=  \max\{d(x)+k-2 \}= \Delta(H)+k-2 \le (k-1) \Delta(H)= \Delta([H]_2)}$, since $k \ge 2$, (it is easy to prove that if $H$ is $k$-uniform we have  $\Delta([H]_2)=(k-1) \Delta(H)$). \\
${\displaystyle \lambda_{\max} \ge r(H)-1=k-1}$.
\end{proof}


\subsection{Interlacing properties of matrix $A_H$}
Let us recall the interlacing property of Hermitian matrices: \\
Let $A$ be a Hermitian matrix $n\times n$, with ${\displaystyle \textrm{Spec}(A)=\{ \lambda_1 \ge \lambda_2 \ldots \ge \lambda_n \}}$, and $B$ a principal submatrix $t \times t$ with 
${\displaystyle \textrm{Spec}(B)= \{ \mu_1 \ge \mu_2 \ldots \ge \mu_t \}}$; then 
\[
\textrm{for}\;  s=1, \ldots , t \;:\; \lambda_s \ge \mu_s \ge \lambda_{s+n-t}.
\]

\begin{application} : Let $r=r(H)$ and $M_r$ be a principal submatrix of $A_H$ associated with $e$ with $|\varepsilon(e)|=r$: $M_r$ is an $r\times r$ matrix composed of $0$ on the diagonal and $1$ elsewhere; it is well known that $\textrm{Spec}(M_r)= \{ \mu_1=r-1, \mu_2=-1,
\ldots \mu_r=-1 \}$. By the interlacing property, we have:
\[
\lambda_{\max}=\lambda_1 \ge \mu_1=r-1 \ge \lambda_{m_E-r+1}; \;\;  \lambda_2 \ge \mu_2 =-1 \ge \lambda_{m_E-r+2} \ldots \lambda_r \ge \mu_r =-1 \ge \lambda_{m_E}= \lambda_{\min}
\]
\end{application}

\begin{application} : 
We use $A_H^{l}$;  so we need certain properties of the Hermitian matrix $n\times n-N_n$ composed of $0$ on the diagonal and $i$ above the diagonal (so $-\ii$ at the bottom of the diagonal):


\begin{lem}\label{lemPropertSpec}\textsc{}
\begin{itemize}
\item[i)] ${\displaystyle \textrm{Spec}(N_n)=\{ \nu_1 \ge \nu_2 \ldots  \ge \nu_n \}}$ is symmetric about $0$;
\item[ii)] ${\displaystyle \sum \nu_i=0}$;
\item[iii)]  ${\displaystyle \sum \nu_i^2=n(n-1)}$;
\item[iv)] ${\displaystyle  \nu_1 \ge \sqrt{n-1}}$;
\item[v)] if $n$ is odd then ${\displaystyle \nu_1 \ge \sqrt{n}}$.
\end{itemize}
\end{lem}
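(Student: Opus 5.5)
The matrix $N_n$ is Hermitian with $\overline{N_n}=N_n^t=-N_n$, since its entries are $0$ on the diagonal and $\pm\ii$ off it. The whole plan rests on this identity.

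For i), if $N_n v=\nu v$ with $\nu$ real, conjugating gives $\overline{N_n}\,\overline{v}=\nu\overline{v}$, that is $-N_n\overline{v}=\nu\overline{v}$. So $-\nu$ is an eigenvalue, and conjugation is a bijection between the eigenspaces for $\nu$ and $-\nu$, so multiplicities agree and the spectrum is symmetric about $0$. Equivalently, $\det(xI-N_n)=\det(xI+\overline{N_n})$; this has real coefficients (the spectrum is real), so it equals $\det(xI+N_n)$ up to sign, and the characteristic polynomial has parity $(-1)^n$.

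Item ii) follows because $\mathrm{Tr}\,N_n=0$. Item iii) follows because $\sum\nu_i^2=\mathrm{Tr}\,N_n^2=\mathrm{Tr}\,N_n\overline{N_n}^t=\sum_{j,k}|n_{jk}|^2=n(n-1)$, exactly as in Proposition \ref{firstProperties} vi).

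For iv), I would combine i) and iii). By symmetry the nonzero eigenvalues come in pairs $\pm\nu$, and the zero eigenvalue has multiplicity of the same parity as $n$. Since $\sum\nu_i^2=n(n-1)$ and every $\nu_i^2\le\nu_1^2$, we get $n\nu_1^2\ge n(n-1)$. This immediately gives $\nu_1\ge\sqrt{n-1}$, with no Rayleigh quotient needed.

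For v), the key point is that when $n$ is odd, i) forces $0$ to be an eigenvalue, because the characteristic polynomial is odd. Then at most $n-1$ eigenvalues are nonzero, each with $\nu_i^2\le\nu_1^2$, so $(n-1)\nu_1^2\ge n(n-1)$ and $\nu_1\ge\sqrt n$.

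I expect the main obstacle to be the rigorous justification of symmetry in i); the eigenvector-conjugation argument settles it cleanly. As a sanity check, I would compute $\mathrm{Spec}(N_n)$ explicitly: $N_n=\ii(J_+-J_-)$ with strictly triangular all-ones parts, and a geometric-type ansatz $v_k=\omega^k$ yields eigenvalues $\cot\bigl(\frac{(2k-1)\pi}{2n}\bigr)$. This confirms both i) and the bounds, but it is not needed for the proof.
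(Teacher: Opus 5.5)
Your proof is correct and matches the paper's argument: ii)--v) follow it step for step (zero trace, $\mathrm{Tr}\,N_n^2=n(n-1)$, $\sum\nu_i^2\le n\nu_1^2$ via the symmetry in i), and for odd $n$ the forced eigenvalue $0$ giving $(n-1)\nu_1^2\ge n(n-1)$, which also fixes the paper's slip $\nu_1^2\ge\sqrt{2t+1}$). The only difference is cosmetic and lies in i): you conjugate eigenvectors using $\overline{N_n}=-N_n$, whereas the paper uses that $\ii N_n$ is a real matrix, so its eigenvalues $\ii\nu$ are closed under conjugation. One small correction to your aside: $\det(xI-N_n)$ equals $\det(xI+N_n)$ exactly, and it is $\det(-xI-N_n)$ that differs from it by $(-1)^n$.
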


\begin{proof}

The arguments used in this proof are inspired by \cite{Guo2015}.
\begin{itemize}
\item[i)] The matrix $\ii N_n$ has coefficients $0$, $\pm1$ -antisymmetric, but not Hermitian-  therefore: \\
${\displaystyle \Phi(\ii N_n,x) \in \mathbb{R}[x]: \alpha \in \textrm{Spec}(\ii N_n) \Longrightarrow \overline{\alpha} \in \textrm{Spec}(\ii N_n)}$; or, if ${\displaystyle  \nu \in \textrm{Spec}(N_n)}$, \\
then ${\displaystyle  \alpha=\ii \nu \in\textrm{Spec}(\ii N_n)}$, and ${\displaystyle  \overline{\alpha}=-\ii \nu\in \textrm{Spec}(\ii  N_n)}$;\\
$\nu$ is a real number: ${\displaystyle  \overline{\alpha}=\ii \nu', \nu' \in \textrm{Spec}(N_n)}$; so ${\displaystyle  -\ii\nu=\ii\nu': \nu'=- \nu \in \textrm{Spec}(N_n)}$,
\item[ii)] $Tr(N_n)=0= \sum \nu_i$;
\item[iii)] the diagonal coefficient $c_{jj}$ of $N_n^2$ is $c_{jj}= \sum_k c_{jk} c_{kj}=\sum_k | c_{jk}|^2=n-1$, so $Tr(N_n^2)=n(n-1)=\sum \nu_i^2$;
\item[iv)] $n(n-1)=\sum \nu_i^2 \le n \nu_1^2$, therefore $\nu_1^2 \ge n-1$
\item[v)] if $n=2t+1$ then $(2t+1)2t= \sum \nu_i^2=2 \sum_{\nu_i >0} \nu_i^2 \le 2t \nu_1^2 : \nu_1^2 \ge \sqrt{2t+1}=\sqrt{n}$.
\end{itemize}

\end{proof}

Let $s=\Delta(H)$, so that the principal submatrix of $A_H^{l}$ associated with    $x\in V$ such that $d(x)=s$ is precisely $N_s$; by  Lemma \ref{lemPropertSpec}, 
$\textrm{Spec}(N_s)$ is symmetric with respect to $0$, therefore $\nu_{ \lceil \frac{s}{2} \rceil } \ge 0, \nu_{\lfloor \frac{s}{2} \rfloor} \le 0$.\\
By the  Interlacing property, we have:
\[
\lambda_{\max}=\lambda_1 \ge \nu_1; \ldots ;  \lambda_{\lceil \frac{s}{2} \rceil} \ge \nu_{\lceil \frac{s}{2} \rceil} \ge 0.
\]
These inequalities prove that $\vert \textrm{Spec}^+(A_H)\vert \ge \lceil \frac{s}{2} \rceil= {\Big\lceil \frac{\Delta(H)}{2} \Big\rceil}$.\\
In addition, we have:
\[
0 \ge \nu_{ \lfloor \frac{s}{2} \rfloor +1 } \ge \lambda_{ m_E -\lceil \frac{s}{2}\rfloor +1 } \ldots  0 \ge \nu_s \ge \lambda _{m_E}.
\]
These inequalities prove that $|\textrm{Spec}^-(A_H)| \ge \lfloor \frac{s}{2} \rfloor= \Big\lfloor \frac{\Delta(H}{2}\Big\rfloor $; moreover,
$\nu_1 \ge \sqrt{s-1}$ or $\sqrt{s}$ by  Lemma \ref{lemPropertSpec}, we have proven:

\begin{prop}\textsc{}
Let $H=(V; E)$ be a   hypergraph and $A_H$ its adjacency matriw, we have the following properties:
\begin{description}
\item[i)]  ${\displaystyle \lambda_{\max} \ge \sqrt{\Delta(H)-1}, \;\; \lambda_{\max} \ge \sqrt{\Delta(H)}}$  if ${\displaystyle\Delta(H)}$ is odd;
\item[ii)]  ${\displaystyle \vert \textrm{Spec}^+(A_H)\vert \ge  \Big\lceil \frac{\Delta(H)}{2} \Big\rceil  \textrm{and}  \;\;\;   \vert \textrm{Spec}^-(A_H)\vert \ge \Big \lfloor \frac{\Delta(H}{2} \Big\rfloor}$.
\end{description}
\end{prop}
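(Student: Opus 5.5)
The plan is to use the lexicographically ordered matrix $A_H^{l}$, which is similar to $A_H$ by Remark \ref{remSimilar} and so has the same spectrum. In $A_H^{l}$ every vertex $x$ gives a principal block that is exactly $N_{d(x)}$ (Proposition \ref{resultgeneral} iv)). Fix a vertex $x$ with $d(x)=s=\Delta(H)$. The $s\times s$ principal submatrix indexed by $\{xe : e\in H(x)\}$ is then $N_s$. Denote the eigenvalues of $A_H$ by $\lambda_1\ge\cdots\ge\lambda_{m_E}$ and those of $N_s$ by $\nu_1\ge\cdots\ge\nu_s$.

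For i), Cauchy interlacing for Hermitian matrices gives $\lambda_{\max}=\lambda_1\ge\nu_1$. Lemma \ref{lemPropertSpec} iv) then gives $\nu_1\ge\sqrt{s-1}$. When $s$ is odd I would argue directly rather than cite part v) as written, whose last line has a typo. By symmetry of $\mathrm{Spec}(N_s)$ (part i)) and oddness of $s$, $0$ is an eigenvalue, so at most $s-1$ eigenvalues are nonzero. Hence $s(s-1)=\sum\nu_i^2\le (s-1)\nu_1^2$, which gives $\nu_1\ge\sqrt{s}$. Alternatively, pairing $\pm\nu$ shows $\sum\nu_i^2=2\sum_{\nu_i>0}\nu_i^2\le 2\cdot\frac{s-1}{2}\nu_1^2$. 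Either way $\lambda_{\max}\ge\sqrt{\Delta(H)}$.

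For ii), symmetry of the spectrum of $N_s$ gives $\nu_j=-\nu_{s+1-j}$. I claim $N_s$ has no eigenvalue $0$ when $s$ is even, and a simple $0$ when $s$ is odd. This is the main obstacle. I would handle it through the structure of $\ii N_s$, the real skew matrix with all $1$'s above the diagonal. Its determinant is the square of its Pfaffian, and the Pfaffian of the all-ones-upper skew matrix of even order is $1$, which can be checked by the recursive Pfaffian expansion or by row reductions. Hence $N_s$ is invertible for even $s$. For odd $s$, deleting one row and column gives the even case, so the rank is $s-1$.

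Consequently $N_s$ has exactly $\lceil s/2\rceil$ nonnegative eigenvalues, namely $\lfloor s/2\rfloor$ strictly positive ones plus $0$ when $s$ is odd. The upper interlacing inequalities $\lambda_j\ge\nu_j$ for $j\le\lceil s/2\rceil$ then give that many eigenvalues $\lambda_j\ge 0$. The lower inequalities $\nu_j\ge\lambda_{j+m_E-s}$ for $j>\lceil s/2\rceil$ give $\lfloor s/2\rfloor$ eigenvalues $\lambda<0$.

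The counts in ii) should therefore be read as counting eigenvalues $\ge0$ for $\mathrm{Spec}^+$ and $<0$ for $\mathrm{Spec}^-$, which is the convention the statement implicitly uses. If strict positivity is wanted for $\mathrm{Spec}^+$, the argument gives $\lfloor s/2\rfloor$ strictly positive eigenvalues. By the symmetric argument it also gives $\lfloor s/2\rfloor$ strictly negative ones, since $\nu_{s+1-j}=-\nu_j<0$.
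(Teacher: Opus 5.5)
Your argument follows the paper's route: interlace against the $\ii$-block $N_s$ of $A_H^{l}$, with $s=\Delta(H)$, and use the symmetry and trace facts of Lemma \ref{lemPropertSpec}. Where you depart from the paper, you improve on it. You repair the slip in Lemma \ref{lemPropertSpec} v). Your Pfaffian/rank computation, which shows that $0$ occurs in $\mathrm{Spec}(N_s)$ with multiplicity exactly $s-2\lfloor s/2\rfloor$, is what makes the negative count strict. The paper only uses $\nu_j\le 0$ for $j>\lfloor s/2\rfloor$, so it really counts non-positive eigenvalues. You obtain $\lfloor s/2\rfloor$ strictly negative ones, which fits the $\lceil\cdot\rceil/\lfloor\cdot\rfloor$ asymmetry in ii). (A harmless slip: $\ii N_s$ has $-1$'s above the diagonal; it is $-\ii N_s$ that has $+1$'s.)

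There is one genuine gap, which you share with the paper: the odd clause of i) when $s=\Delta(H)=1$.
\begin{itemize}
\item[-] Your inequality $s(s-1)\le (s-1)\nu_1^2$ reads $0\le 0$ there, so you cannot cancel $s-1$. Lemma \ref{lemPropertSpec} v) divides by $2t=0$ in the same way, and indeed $N_1=(0)$ has $\nu_1=0$.
\item[-] The conclusion is actually false in this case. For $V=\{x\}$, $E=\{e\}$, $\varepsilon(e)=\{x\}$, one gets $A_H=(0)$, so $\lambda_{\max}=0<1=\sqrt{\Delta(H)}$.
\item[-] More generally, if $\Delta(H)=1$ the hyperedges are pairwise disjoint and $A_H$ is block diagonal with the $1$-blocks of Proposition \ref{resultgeneral} ii). Hence $\lambda_{\max}=r(H)-1$, which is $\ge 1$ exactly when some hyperedge has at least two vertices.
\end{itemize}
Your proof therefore establishes the odd clause only for $\Delta(H)\ge 3$. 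You should either add that hypothesis, or treat $\Delta(H)=1$ separately via $\lambda_{\max}\ge r(H)-1$ (Proposition \ref{firstProperties} iv)) under the extra assumption $r(H)\ge 2$.
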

\end{application}


\section{Characterization of hypergraphs by their adjacency matrix}
In this section we will show that the adjacency matrix fully characterizes the associated hypergraph. 
\subsection{Hypergraph Isomorphism}
Let ${\displaystyle f: H=(V;E,\varepsilon) \longrightarrow H'=(V';E', \varepsilon')}$ be an isomorphism:
\[
f: V \longrightarrow V', \; f: E\longrightarrow E';
\]
 There is therefore:
 \[
\begin{split}
&g: H=(V';E',\varepsilon') \longrightarrow H=(V; E, \varepsilon)\; \textrm{ such as}\; g: V' \longrightarrow V, g: E'\longrightarrow E\; \\
&\textrm{with} \; f\circ g= Id\; \textrm{et}\; g \circ f=Id;
\end{split}
\]
thus, we have $f(\varepsilon(e))=\varepsilon'(f(e))$.\\

\noindent This isomorphism $f$ naturally translates into\\
 ${\displaystyle \tilde{f}:VE \longrightarrow V'E'}$ by ${\displaystyle \tilde{f}(xe)=f(x)f(e)}$ because
${\displaystyle x \in \varepsilon(e) \Longrightarrow f(x) \in f(\varepsilon(e))=\varepsilon'(f(e));\; \tilde{f}}$ being bijective, this extends to a 
${\displaystyle \mathbb{C}-\textrm{isomorphism}\; \tilde{f} \in \mathcal{L}(<VE>, <V'E'>)}$. Therefore:
\begin{description}
\item[-] we have ${\displaystyle \mathfrak{A}_H(xe)= \sum_{y \neq x} ye + \sum_{a \neq e} \pm \ii xa \in <VE>}$; so,  ${\displaystyle \mathfrak{A}_H \in \textrm{End}(<VE>)}$;
\item[-]  The following diagram is commutative:
\[ 
\begin{tikzcd}[sep=4cm]
<VE>  \arrow{r}{ \mathfrak{A}_H} \arrow[swap]{d}{\tilde{f}} & <VE> \arrow{d}{\tilde{f}} \\%
<V'E'> \arrow{r}{ \mathfrak{A}_{H'}}&  <V'E'>
\end{tikzcd}
\]\\
\noindent Indeed: Let ${\displaystyle  \tilde{f}(xe)=x'e'}$; let us note ${\displaystyle \beta = \tilde{f} \circ \mathfrak{A}_H \circ \tilde{f}^{-1}:}$\\
 ${\displaystyle  \beta(x'e')=\tilde{f} \circ \mathfrak{A}_H\circ \tilde{f}^{-1}(x'e')=\beta(x'e')=\tilde{f} \circ \mathfrak{A}_H(xe)}$, \\
where ${\displaystyle f(x)=x', f(e)=e'}$;\\
\[
\begin{split}
 \beta(x'e')&= \tilde{f}( \sum_{y\neq x} ye +\sum_{a\neq e} \pm \ii xa)=\sum_{f(y)\neq f(x)} f(y)f(e)+\sum_{f(a)\neq f(e)} \pm \ii f(x)f(a)= \\
 &\sum_{y' \neq x'} y'e' +\sum_{a'\neq e'} \pm \ii x'a'=\mathfrak{A}_{H'}(x'e').
 \end{split}
 \]
 We can therefore conclude that:
\[
\mathfrak{A}_{H'}= \tilde{f} \circ \mathfrak{A}_H \circ \tilde{f}^{-1}
\]

\end{description}

Conversely: Let ${\displaystyle \tilde{f}  : <VE> \longrightarrow <V'E'>}$ be a linear isomorphism such that: ${\displaystyle \tilde{f} \vert_{VE} : VE \longrightarrow V'E'}$ is bijective (so if $V'=V, E'=E$, then it is a change of basis), and satisfying:

\begin{equation}\label{isomHyper}
\mathfrak{A}_{H'}= \tilde{f} \circ \mathfrak{A}_H \circ \tilde{f}^{-1}\Longleftrightarrow \mathfrak{A}_{H'}\circ  \tilde{f} = \tilde{f} \circ \mathfrak{A}_{H}
\end{equation}
we want to prove that $H \simeq H'$ :\\

\begin{description}
\item[a)] let $\xi=xe\in VE$, $\tilde{f} $ being a bijection there is a unique $x'e'\in V'E'$ such that:  $\tilde{f}(xe)=x'e'$; we have:
\begin{equation}\label{equIso1}
 \mathfrak{A}_{H'} \circ\tilde{f}(xe) = \mathfrak{A}_{H'} (x'e')=\sum_{y' \neq x'}y'e'+ \sum_{a'\neq e'} \pm \ii x'a';
\end{equation}
 in the same way:
 \begin{equation}\label{equIso2}
 \tilde{f} \circ\mathfrak{A}_{H} (xe) = \tilde{f}\bigl(\sum_{y \neq x}ye+ \sum_{a\neq e} \pm \ii xa\bigr)= \sum_{y \neq x}\tilde{f}(ye)+ \sum_{a\neq e} \pm \ii \tilde{f}(xa). 
 \end{equation}
 
 Since $\tilde{f}(xe)=x'e'$,  and by  identification, between equation \ref{equIso1} and equation \ref{equIso2}, we obtain:
 \[
  \begin{split}
 & \textrm{for all}\; y\in V,\; \textrm{such that }\; y\in \varepsilon(e): \tilde{f}(ye)=y'e',\; \textrm{for all}\; y'\in \varepsilon(e'), \textrm{and}\\
 &\textrm{for all}\; a\in E,  \textrm{such that }\;  \varepsilon(a)\ni x,  \tilde{f}(xa)=x'a', \textrm{for all}\;   a'\in E' , \textrm{such that }\; \varepsilon(a')\ni x'.
   \end{split}
\]
Therefore, we can define  a function $\phi: E\longrightarrow E'$ such that $\phi(e)=e'$ verifying:
\[
\textrm{for all} \;\; x \in \varepsilon(e):\; f(xe)=x' \varphi(e), \forall \; e\in E;
\]
and another function denoted in the same way $\phi: V\longrightarrow V'$ such that  that $\varphi(x)=x'$ verifying:
$$
\forall \; \varepsilon(e) \ni x:\; f(xe)=\varphi(x)e',  \forall \; x\in V.
$$
It is easy to verify that these functions  are indeed functions.

\item[b)] Functions $\varphi$ is surjective:\\
Let $a' \in E'$; let us choose ${\displaystyle x' \in \varepsilon'(a') : x'a' \in V'E'}$, since $\tilde{f}$ is bijective and therefore surjective, there therefore exists $xa \in VE$ such that $\tilde{f}(xa)=x'a'$, but $\tilde{f}(xa)=x"\varphi(a)$, by definition, therefore $x'=x"$ and $a'=\varphi(a)$. Consequently, we have $\vert E\vert \ge \vert E'\vert$.\\
Now,  let $x' \in V'$; let us choose ${\displaystyle a' : \varepsilon'(a')\ni x',  x'a' \in V'E'}$, since $\tilde{f}$ is bijective and therefore surjective, there therefore exists $xa \in VE$ such that $\tilde{f}(xa)=x'a'$, but $\tilde{f}(xa)=\varphi(x)$e", by definition, therefore $e'=e"$ and $x'=\varphi(x)$. Consequently, we have $\vert V\vert \ge \vert V'\vert$.

\item[c)]  Function $\varphi$  injective:\\
Let $xe, ya\in VE$, $\tilde{f}$ being a bijection, we have:
\[
\tilde{f}(xe)= \tilde{f}(ya)= x'e'= y'a'= x'\varphi(e)= y'\varphi(a);
\]
hence, $\varphi(e)= \varphi(a)$ and $x'=y'$, so
$\varphi(e)=e'= \varphi(a)=a'$; so $\vert E\vert\leq \vert E'\vert$.\\
In the same way, let $xe, ya\in VE$, $\tilde{f}$ being a bijection, we have:
\[
\tilde{f}(xe)= \tilde{f}(ya)= x'e'= y'a'= \varphi(x)e'= \varphi(y)a';
\]
hence, $\varphi(x)= \varphi(y)$ and $e'=a'$, so
$\varphi(x)=x'= \varphi(y)=y'$; so $\vert V\vert\leq \vert V'\vert$.\\
\item[d)] From points $b)$ and $c)$, we can conclude that: $\tilde{f}(xe)=\varphi(x)\varphi(e)$, for all $xe\in VE$. \\

Therefore, these two bijections
\[
\varphi: V \longrightarrow V',\; \varphi: E\longrightarrow E';
\]
define a bijection:\\
${\displaystyle \varphi: H \longrightarrow H'}$ which is an isomorphism, in fact:\\
 ${\displaystyle x \in \varepsilon(e) \Longleftrightarrow xe \in VE \Longleftrightarrow f(xe)=\varphi(x)\varphi(e)\Longleftrightarrow\varphi(x) \in \varepsilon'(\varphi(e))}$.
\end{description}
We have just proven the
\begin{thm}\label{characteristionHypergraph}
Let $H=(V;E,\varepsilon)$ and $H'=(V';E', \varepsilon')$ be two hypergraphs; The following two properties are equivalent: 
\begin{description}
\item[i)] Hypergraphs $H$ and $H'$ are isomorphic: $H \simeq H'$;
\item[ii)] there exists a linear isomorphism ${\displaystyle \tilde{f} \in Iso\left(<VE>, <V'E'>\right)}$ such that: ${\displaystyle \tilde{f} \vert_{VE}: VE \longrightarrow V'E'}$ is bijective such that: ${\displaystyle  \mathfrak{A}_{H'}= \tilde{f} \circ \mathfrak{A}_H \circ \tilde{f}^{-1}}$.
\end{description}
\end{thm}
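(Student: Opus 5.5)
The plan is to treat the two implications separately. The converse (ii)$\Rightarrow$(i) is the real content, and the first thing to do there is reduce the operator identity to a statement about matrix entries. Since $\tilde f|_{VE}$ is a bijection of orthonormal bases, $\tilde f$ is a permutation-type unitary. Hence $\mathfrak{A}_{H'}=\tilde f\circ\mathfrak{A}_H\circ\tilde f^{-1}$ is equivalent to
\[
a'_{\tilde f(u),\tilde f(v)}=a_{u,v}\qquad \text{for all } u,v\in VE .
\]
This follows by computing $\braket{\mathfrak{A}_{H'}\tilde f(v)\,|\,\tilde f(u)}=\braket{\tilde f\mathfrak{A}_H v\,|\,\tilde f u}=\braket{\mathfrak{A}_H v\,|\,u}$.

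For (i)$\Rightarrow$(ii), let $f$ be an isomorphism and set $\tilde f(xe)=f(x)f(e)$, which is a bijection $VE\to V'E'$ because $f(\varepsilon(e))=\varepsilon'(f(e))$. There is one point to handle: the signs $\pm\ii$ depend on the chosen order of $E'$. I will order $E'$ by transporting the order of $E$ through $f$. By Remark \ref{remSimilar}, changing the order only replaces $\mathfrak{A}_{H'}$ by a conjugate by a basis permutation, so this choice is harmless. With this order, the relations ``$x=x'$'', ``$e=e'$'' and ``$e<e'$'' are all preserved by $\tilde f$. The entry formula for $A$ then gives $a'_{\tilde f(u),\tilde f(v)}=a_{u,v}$, which is the computation already displayed before the theorem.

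For (ii)$\Rightarrow$(i), I use the entry identity, which separates real and imaginary entries. Call $u,v$ \emph{$1$-adjacent} if $a_{u,v}=1$ (same hyperedge, different vertices), and \emph{$\ii$-adjacent} if $a_{u,v}=\pm\ii$ (same vertex, different hyperedges). The map $\tilde f$ preserves both relations, and so does $\tilde f^{-1}$.
\begin{itemize}
\item By Proposition \ref{resultgeneral} ii), the classes $\{xe: x\in\varepsilon(e)\}$ are exactly the cliques and components of the $1$-adjacency graph on $VE$. Since $\tilde f$ preserves $1$-adjacency, it maps each class into a single class. I then define $\varphi(e)$ to be the hyperedge of that image class.
\item In the same way, the $\ii$-components $\{xa: a\in H(x)\}$ (Proposition \ref{resultgeneral} iv)) give a map $\varphi(x)$.
\item Consequently $\tilde f(xe)=\varphi(x)\varphi(e)$ for every $xe\in VE$.
\end{itemize}

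The step I expect to be the main obstacle is well-definedness in degenerate cases. When $|\varepsilon(e)|=1$, the class of $e$ is a single isolated element, and when $d(x)=1$ the $\ii$-class of $x$ is a singleton. In both cases no adjacency forces anything, but the class is still one element, so $\varphi$ is determined by that single image and the definition goes through unchanged. Two things then remain.
\begin{itemize}
\item \emph{Bijectivity of $\varphi$.} I apply the same construction to $\tilde f^{-1}$, which satisfies the symmetric relation. This yields maps $\psi$ with $\psi\circ\varphi=\mathrm{Id}$ and $\varphi\circ\psi=\mathrm{Id}$ on both the vertex and hyperedge sides. The hypothesis that there are no isolated vertices and no empty hyperedges guarantees every vertex and hyperedge occurs in $VE$, so this is enough.
\item \emph{Preservation of incidence.} We have $x\in\varepsilon(e)\iff xe\in VE\iff \varphi(x)\varphi(e)=\tilde f(xe)\in V'E'\iff \varphi(x)\in\varepsilon'(\varphi(e))$. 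The middle equivalence uses that $\tilde f|_{VE}$ is onto $V'E'$.
\end{itemize}
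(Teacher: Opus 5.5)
The one step that does not hold is the appeal to Remark~\ref{remSimilar} to call the choice of order ``harmless''. That remark is incorrect. A relabelling of vertices and hyperedges is not an automorphism of $H$ in general, and the spectrum of $A_H$ depends on the order chosen on $E$.

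Take $\varepsilon(e_1)=\{x_1,x_2\}$, $\varepsilon(e_2)=\{x_2,x_3\}$, $\varepsilon(e_3)=\{x_3,x_4\}$, $\varepsilon(e_4)=\{x_4,x_1\}$. Then $A_H$ is supported exactly on the $8$-cycle $x_1e_1,x_2e_1,x_2e_2,x_3e_2,x_3e_3,x_4e_3,x_4e_4,x_1e_4$ of $\Gamma_H$, with unimodular entries. Its eigenvalues are therefore $2\cos((\theta+2\pi j)/8)$ for $0\le j\le 7$, where $e^{\ii\theta}$ is the product of the entries around the cycle. That product is $\ii\cdot\ii\cdot\ii\cdot(-\ii)=-1$ for $e_1<e_2<e_3<e_4$. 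It is $\ii\cdot(-\ii)\cdot\ii\cdot(-\ii)=1$ for $e_1<e_3<e_2<e_4$. So $\lambda_{\max}=2\cos(\pi/8)$ in the first case and $\lambda_{\max}=2$ in the second.

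Hence, with $H'=H$ carrying these two orders and $f=\mathrm{Id}$, no $\tilde f$ as in (ii) exists. If the order on $E'$ is fixed independently of $f$, the implication (i)$\Rightarrow$(ii) is false, and no argument can rescue it. What your computation does prove is the version in which $E'$ carries the order transported by $f$ (equivalently, $f$ is increasing on hyperedges). The paper's proof of this direction writes $\pm\ii$ throughout and never compares signs, so it silently makes the same assumption. You should drop the reference to the remark and build this order-compatibility into the statement of (ii).

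Apart from that, your proof follows the paper's route. In one direction you set $\tilde f(xe)=f(x)f(e)$. In the other you read off from the coefficients $1$ and $\pm\ii$ that $\tilde f$ sends hyperedge classes into hyperedge classes and vertex classes into vertex classes, which gives $\tilde f(xe)=\varphi(x)\varphi(e)$.

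Your (ii)$\Rightarrow$(i) is sound and independent of the orders, since it only uses which entries are real. Getting the inverse of $\varphi$ by running the same construction on $\tilde f^{-1}$ is cleaner than the paper's steps b)--c). Their injectivity argument starts from $\tilde f(xe)=\tilde f(ya)$, so it runs in the wrong direction.
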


\section{Graph associated with a hypergraph}\label{associateGraph}
Using the hypergraph adjacency matrix, we can define a graph that we will call \emph{associated graph} \index{Associated graph} to $H=(V;E, \varepsilon)$ is the simple graph $\Gamma_H=(VE;U)$ defined by:

\begin{equation}
\{xe,x'e'\} \in U \; \textrm{if and only if}\; a_{xe,x'e'} \neq 0.
\end{equation}
 
As illustration Figure \ref{assoGraph} gives the $\Gamma_H$ of the hypergraph of figure 7.1:
\begin{figure}[H]
\psfrag{a}{$21$}
\psfrag{b}{$11$}
\psfrag{c}{$12$}
\psfrag{d}{$42$}
\psfrag{e}{$43$}
\psfrag{f}{$33$}
\psfrag{g}{$31$}
\
\centering
\includegraphics[width=5cm]{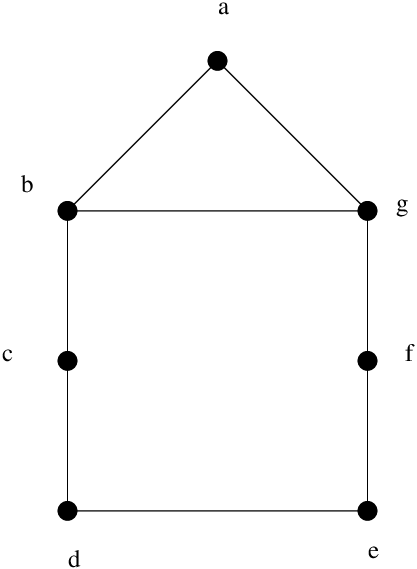}
\caption{This figure shows  the associatd graph of   hypergraph of Figure \ref{matriceHypergarph}. In this figure we denote: $x_{i}e_{j}$ by $ij$.}
\label{assoGraph}
\end{figure}
The adjacencey matrix of the associated graph of Figure \ref{assoGraph}  is given above:

\bordermatrix{&x_1e_1 &x_2e_1&x_3e_1&x_1e_2&x_4e_2&x_3e_3&x_4e_3
\cr  x_1e_1  & 0 & 1& 1& 1& 0 & 0 & 0   
 \cr x_2e_1 & 1& 0 & 1 & 0 & 0 & 0 & 0 
 \cr x_3e_1 & 1& 1 & 0 & 0& 0 &1 & 0
 \cr x_1e_2& 1& 0 & 1& 0 & 1 & 0 & 0
 \cr x_4e_2 & 0 & 0 & 0 & 1& 0 & 0 & 1
 \cr x_3e_3 & 0 & 0& 1 & 0 & 0 & 0 & 1
 \cr x_4e_3& 0 & 0 & 0 & 0 & 1& 1 & 0}
\mbox{}\\ 
 
\noindent This matrix will be denoted: $A(\Gamma_{H})$.\\

\noindent As illustration, Figure \ref{assoGraph} gives the associated graph $\Gamma_H$ of the hypergraph of Figure \ref{matriceHypergarph}.

\begin{prop}
Let  $H=(V;E,\varepsilon)$  be a  simple linear hypergraph with $\delta(H)\geq 2$; let   $H^{*}= (V^*; E^*, \varepsilon^{*})$ be its dual , then
\[
\Gamma_H\simeq \Gamma_{H^{*}}
\]

\end{prop}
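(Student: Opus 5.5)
The plan is to write down the identity-like bijection $\psi: VE \longrightarrow V^*E^*$, $\psi(xe)=E_x\text{-}X_e$ (that is, $xe \mapsto ex$ in the notation $V^*E^*=EV$ used for the matrix of $\mathfrak{A}_{H^*}$ above), and check that it preserves adjacency in the associated graphs. First, $\psi$ is well defined and bijective: by the definition of the dual, $(X_e,E_x)\in V^*\ast E^*$ if and only if $x\in\varepsilon(e)$, i.e.\ if and only if $xe\in VE$. The hypotheses that $H$ is simple, linear and $\delta(H)\ge 2$ are used here only to guarantee that $H^*$ is again a hypergraph in the sense of the paper. It has no isolated vertex and no empty hyperedge, because $H$ has none. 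It has no repeated hyperedges, because $H$ is simple, and it is again linear. Hence $\Gamma_{H^*}$ is defined on exactly the set $\psi(VE)$.

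The main step is to describe the edge set $U$ of $\Gamma_H$ combinatorially, directly from the definition of $a_{xe,x'e'}$. For $xe\neq x'e'$ in $VE$, the entry $a_{xe,x'e'}$ is $1$ when $e=e'$ and $x\neq x'$, it is $\pm\ii$ when $x=x'$ and $e\neq e'$, and it is $0$ otherwise. Hence
\[
\{xe,x'e'\}\in U \iff \bigl(e=e' \wedge x\neq x'\bigr)\ \text{or}\ \bigl(x=x' \wedge e\neq e'\bigr),
\]
so two pairs are adjacent exactly when they agree in precisely one coordinate. Applying the same description to $H^*$ gives the following. Two elements $ex, e'x'$ of $V^*E^*$ are adjacent in $\Gamma_{H^*}$ exactly when they share the same hyperedge of $H^*$ (namely $x=x'$) with different vertices ($e\neq e'$), or share the same vertex of $H^*$ ($e=e'$) with different hyperedges ($x\neq x'$). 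Equivalently, one can invoke Proposition~\ref{constructionMatriceChap7}. Its rules only exchange the nonzero values $1$ and $\pm\ii$ and never create or destroy a zero entry, so the zero pattern of $A_{H^*}$ in the basis $V^*E^*$ is that of $A_H$ transported by $\psi$.

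Comparing the two descriptions, the condition ``exactly one of the coordinates agrees'' is symmetric in the roles of $V$ and $E$. Therefore $\{xe,x'e'\}\in U$ if and only if $\{\psi(xe),\psi(x'e')\}$ is an edge of $\Gamma_{H^*}$, and $\psi$ is a graph isomorphism $\Gamma_H\simeq\Gamma_{H^*}$.

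The only delicate point I expect is bookkeeping rather than mathematics. I must make sure that the orderings chosen on $V^*=E$ and $E^*=V$ play no role, which holds because $\Gamma_H$ depends only on whether entries are nonzero, not on the sign of $\pm\ii$ (see also Remark~\ref{remSimilar}). I must also confirm that the dual satisfies the standing assumptions of Section 2 under the stated hypotheses.
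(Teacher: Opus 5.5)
Your proof is correct and follows the paper's route: the same map $(x,e)\mapsto(X_e,E_x)$, with adjacency checked in the same two cases (shared vertex, shared hyperedge), and your ``agree in exactly one coordinate'' formulation makes the symmetry explicit. One unused side remark is slightly off: $H$ simple alone does not rule out repeated hyperedges in $H^*$ (a single edge $\{x,y\}$ gives $H(x)=H(y)$); it is linearity together with $\delta(H)\ge 2$ that forces distinct stars, which is exactly what the paper invokes for injectivity, whereas with $E^*:=V$ taken literally your bijection needs no hypotheses at all.
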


\begin{proof}\mbox{}\\
Note that under the conditions of the Proposition, the hypergraph $H^{*}= (V^*; E^*, \varepsilon^{*})$ is simple linear with $\delta(H^{*})\geq 2$.\\
Since  ${\displaystyle V^*\ast E^*:= \{(X_{e}, E_{x}):X_{e}\in \varepsilon^{*}(E_{x}), \; \textrm{iff}\; \varepsilon(e)\ni x\}}$, let's define the function:
 \[
 \begin{array}{ccccc}
h & : & \Gamma_H  & \to & \Gamma_{H^{*}} \\
 & & (x,e) & \mapsto & (X_{e}, E_{x}) \\
\end{array}
\]
 \[
 \begin{split}
  &h((x,e))= h((y,a))= (X_{e}, E_{x})=(X_{a}, E_{y})\Longrightarrow  X_{e}=X_{a}\wedge E_{x}=E_{y}\\
  &\Longrightarrow e=a \wedge H(x)=H(y) \Longrightarrow x=y,
  \end{split}
  \]
 since $H$ is simple and linear.  Moreover: ${\displaystyle \vert X\ast E\vert =  \vert X^{*}\ast E^{*}\vert }$; so, $h$ is a bijection.\\
 Let  ${\displaystyle \{(x, e); (y, a)\}\in E(\Gamma_H)\}}$, we have two cases:
 \begin{itemize}
\item[-] ${\displaystyle x=y\Longleftrightarrow H(x)=H(y) \Longleftrightarrow E_{x} =E_{y})\Longleftrightarrow \{((X_{e}, E_{x}); (X_{a}, E_{y}) \}\in E(\Gamma_{H^{*}})}$
\item[-] ${\displaystyle e=a \Longleftrightarrow  X_{e}=X_{a}\Longleftrightarrow \{((X_{e}, E_{x}); (X_{a}, E_{y}) \}\in E(\Gamma_{H^{*}})}$, 
by applying the same type of reasoning
\end{itemize}

\end{proof}

\begin{figure}[H]
\psfrag{a}{$21$}
\psfrag{b}{$11$}
\psfrag{c}{$12$}
\psfrag{d}{$42$}
\psfrag{e}{$43$}
\psfrag{f}{$33$}
\psfrag{g}{$31$}
\centering
\includegraphics[width=5cm]{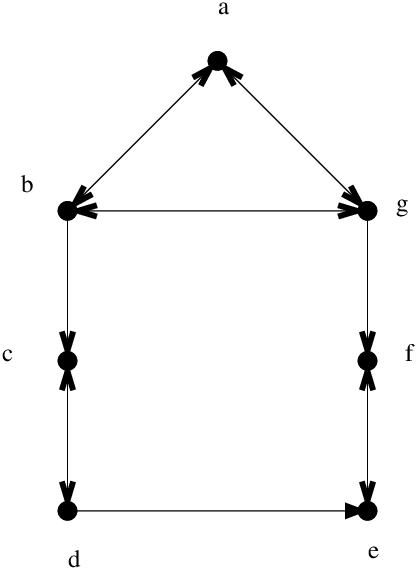}
\caption{This figure shows  the mixed graph of   hypergraph of Figure \ref{matriceHypergarph}. In this figure we denote: $x_{i}e_{j}$ by $ij$.}
\label{mixGraph}
\end{figure}
\begin{rem} 
The matrix $A_H$ can also be interpreted as adjacency matrix of a "mixed digraph" $G_{H}$ shown Figure \ref{mixGraph}. Clearly the undergraph of $G_{H}$ is $\Gamma_H$. The graph $G_{H}$  will be called \emph{associated mixed graph} \index{associated mixed graph}.
\end{rem}
We now give a Corollary to the Theorem \ref{characteristionHypergraph}:

\begin{cor}
Let $H=(V;E,\varepsilon)$ and $H'=(V';E', \varepsilon')$ be two hypergraphs; The following two properties are equivalent: 
\begin{description}
\item[i)] Hypergraphs $H$ and $H'$ are isomorphic: $H \simeq H'$;
\item[ii)] Associated mixed graphs $G_{H}$ and $G_{H'}$ are isomorphic: $G_{H}\simeq G_{H'}$.
\end{description}

\end{cor}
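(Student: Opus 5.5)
The plan is to route the corollary through Theorem \ref{characteristionHypergraph}. The key observation is that a graph isomorphism $\tilde f:G_H\to G_{H'}$ of the associated mixed graphs is precisely a bijection $VE\to V'E'$ that preserves every matrix entry, i.e. $a'_{\tilde f(xe),\tilde f(x'e')}=a_{xe,x'e'}$. Here undirected edges carry weight $1$, and arcs carry $\pm\ii$ with orientation recording the sign. Equivalently, if $P$ is the permutation matrix of $\tilde f$, then $A_{H'}=PA_HP^{-1}$, that is $\mathfrak{A}_{H'}=\tilde f\circ\mathfrak{A}_H\circ\tilde f^{-1}$ on the linear extension. So i)$\Leftrightarrow$ii) reduces to i)$\Leftrightarrow$ ii) of the theorem, with one subtlety to handle in each direction.

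For i)$\Rightarrow$ii), I take a hypergraph isomorphism $f$ and form $\tilde f(xe)=f(x)f(e)$, as in the paragraph preceding the theorem. Undirected edges $\{xe,ye\}$ are sent to $\{f(x)f(e),f(y)f(e)\}$, so they are preserved.

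The arcs need care, because their orientation depends on the chosen order on $E$. I will resolve this with Remark \ref{remSimilar}: order $E'$ by transporting the order of $E$ via $f$, i.e. $f(a)<f(b)$ iff $a<b$. Changing the order on $E'$ only conjugates $A_{H'}$ by a permutation, so the mixed graph $G_{H'}$ is defined up to isomorphism. With the transported order, arcs $xa\to xb$ with $a<b$ map to arcs $f(x)f(a)\to f(x)f(b)$, and $\tilde f$ is a mixed graph isomorphism.

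For ii)$\Rightarrow$i), let $g:G_H\to G_{H'}$ be a mixed graph isomorphism. By the observation above, $g$ preserves all entries, so its linear extension $\tilde g$ is bijective on bases and satisfies $\mathfrak{A}_{H'}\circ\tilde g=\tilde g\circ\mathfrak{A}_H$. Theorem \ref{characteristionHypergraph} then yields $H\simeq H'$.

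I could also argue directly from the graph. The undirected part of $G_H$ is a disjoint union of cliques, one per hyperedge, by Proposition \ref{resultgeneral} ii). The arc part is a disjoint union of tournaments, one per vertex, by iv). Since $g$ preserves both types, it induces bijections on hyperedges and on vertices, compatible with incidence because $xe$ is the unique common element of the clique of $e$ and the tournament of $x$.

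The main obstacle is the ordering dependence of the arc directions, together with degenerate cases. A hyperedge with one vertex gives a singleton clique, and a vertex of degree one gives a singleton tournament, so these must be matched as isolated pieces. The direct argument covers them, since each $xe$ still lies in exactly one clique and one tournament, possibly singletons. I also note that ``isomorphic'' for $G_H$ should be read as ``for suitable orderings'', equivalently via Remark \ref{remSimilar}. I will state this convention explicitly before the proof.
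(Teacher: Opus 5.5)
Your reduction to Theorem \ref{characteristionHypergraph} is the paper's route; its proof is the single line ``directly by applying'' the theorem.

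Your direction ii)$\Rightarrow$i) is sound. A mixed-graph isomorphism preserves every entry of $A_H$. Both the converse half of the theorem and your clique/tournament argument use only the distinction between edges and arcs, not the orientation of the arcs. They also rely on the standing assumption that there are no isolated vertices or empty hyperedges, which $G_H$ cannot detect.

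You are also right that arc orientation depends on the order on $E$. The paper hides this behind its ``$\pm$'' notation. Transporting the order of $E$ to $E'$ via $f$ does make $\tilde f(xe)=f(x)f(e)$ a mixed-graph isomorphism.

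The gap is the step meant to make the choice of order irrelevant. The claim that changing the order on $E'$ only conjugates $A_{H'}$ by a permutation is false, and so is Remark \ref{remSimilar}, on which it rests. Reordering $E$ changes the endomorphism $\mathfrak{A}$ itself (the signs of its $\pm\ii$ entries), not merely the basis in which it is written.

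Here is a counterexample. Take the $4$-cycle $\varepsilon(a)=\{1,2\}$, $\varepsilon(b)=\{2,3\}$, $\varepsilon(c)=\{3,4\}$, $\varepsilon(d)=\{4,1\}$. Then $\Gamma_H$ is the $8$-cycle $1a,2a,2b,3b,3c,4c,4d,1d$, and the product of the entries of $A_H$ around it is
\[
(\ii)(\ii)(\ii)(-\ii)=-1 \quad\text{for } a<b<c<d, \qquad (\ii)(-\ii)(\ii)(-\ii)=+1 \quad\text{for } a<c<b<d .
\]
A Hermitian matrix with unimodular entries supported on a cycle is conjugate, by a diagonal unitary matrix, to the cycle with a single edge weighted by this product. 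So the order $a<b<c<d$ gives spectrum $\{2\cos((2k+1)\pi/8)\}$, with $\lambda_{\max}=2\cos(\pi/8)<2$. The order $a<c<b<d$ gives the spectrum of $C_8$, with $\lambda_{\max}=2$.

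Hence the two mixed graphs of one and the same hypergraph are not isomorphic. With arbitrary fixed orders, the implication i)$\Rightarrow$ii) is false, both in the corollary and in the theorem, so no argument can close this step.

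What you have actually proved is the correct statement: $H\simeq H'$ if and only if $G_H\simeq G_{H'}$ for suitable orders, for example the order on $E'$ transported from $E$ by $f$. Moreover, ii)$\Rightarrow$i) holds for any orders. State the corollary in that form and drop ``equivalently via Remark \ref{remSimilar}''.
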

\begin{proof}

Directly by applying  Theorem \ref{characteristionHypergraph}.
\end{proof}

\subsection{Properties of the adjacency matrix of the associated graph}
We can define several equivalence relations on the vertices of $\Gamma_{H}$:
\begin{itemize}
\item[-] let $xe, x'e'\in VE$: $xe\mathcal{R} x'e' \iff e=e'$;
\item[] another can be defined as follows:
\item[-] let $xe, x'e'\in VE$: $xe\mathcal{R}' x'e' \iff x=x'$.
\end{itemize}

\subsubsection{Properties of $\mathcal{R}$}
The    \emph{line-graph} of $H$ is the graph
$L(H) = (V'; E')$ such that:

\begin{enumerate}
\item $V' := E$;
\item $E': \{e,e' \}\in E'$ ($e \neq e'$) if and only if $\varepsilon(e)\cap \varepsilon(e)' \neq \emptyset$.
\end{enumerate}

We will denote by: $\mathcal{R}(xe)$ the equivalence class of $xe$.\\
This equivalence relation allows us to define a simple graph structure on the quotient $VE/\mathcal{R}$ as follows: ${\displaystyle \Gamma_H/ \mathcal{R} = (VE/\mathcal{R} ;U_{\mathcal{R}})}$ with:

\[
\{ \mathcal{R}(xe), \mathcal{R}(x'e') \} \in U_{\mathcal{R}} \; \text{if and only if} \; \varepsilon(e) \cap \varepsilon(e') \neq \emptyset.
\]
(This relation is well-defined because it is independent of $x,x',e,e'$).\\

Therefore, we have the canonical map: 
\[
 \pi_{\mathcal{R}} : \Gamma_H=(VE;U) \longrightarrow \Gamma_H/ \alpha =(VE/\mathcal{R}; U_{\mathcal{R}})
 \]
  which is a (surjective) morphism of simple graphs in the sense that if $xe$ is adjacent to $x'e'$, then $\pi_{\mathcal{R}} (xe)$ and $\pi_{\mathcal{R}}(x'e')$ are equal or adjacent: \\
indeed, 
\begin{itemize}
\item[-] if $e=e'$  we have $\mathcal{R}(xe)=\mathcal{R}(x'e') \}$ , by definition;
\item[-] if $e\neq e'$, we have ${\displaystyle \{ \mathcal{R}(xe), \mathcal{R}(x'e') \} \in U_{\mathcal{R}}}$ if and only if  $x= x' \in \varepsilon(e) \cap \varepsilon(e')$.
\end{itemize}

\begin{prop} 
Let $\Gamma_H$ be the associated graph $H$; then, the quotient graph $\Gamma_H/ \mathcal{R} =(VE/\mathcal{R}; U_{\mathcal{R}})$ 
is isomorphic to $L(H)=(V';E')$ the line-graph of $H$.
\end{prop}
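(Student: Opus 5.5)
The plan is to write down the obvious map from classes to hyperedges and check directly that it is a graph isomorphism. Define
\[
\psi : VE/\mathcal{R} \longrightarrow E, \qquad \psi(\mathcal{R}(xe)) = e .
\]

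\emph{Well-definedness and injectivity.} By the definition of $\mathcal{R}$, we have $\mathcal{R}(xe)=\mathcal{R}(x'e')$ if and only if $e=e'$. This single equivalence shows both that $\psi$ does not depend on the representative chosen and that $\psi$ is injective.

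\emph{Surjectivity.} This is where the standing hypothesis of the paper is used: $\varepsilon(e)\neq\emptyset$ for every $e\in E$. Hence for each $e$ we can pick some $x\in\varepsilon(e)$, so that $xe\in VE$ and $\psi(\mathcal{R}(xe))=e$. Thus $\psi$ is a bijection from $VE/\mathcal{R}$ onto $V'=E$.

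\emph{Adjacency.} We need that for classes $\mathcal{R}(xe)\neq\mathcal{R}(x'e')$, equivalently $e\neq e'$,
\[
\{\mathcal{R}(xe),\mathcal{R}(x'e')\}\in U_{\mathcal{R}} \iff \{e,e'\}\in E'.
\]
This is immediate, since both sides are defined by the same condition $\varepsilon(e)\cap\varepsilon(e')\neq\emptyset$. I would also remark that $U_{\mathcal{R}}$ is independent of the representatives, since the defining condition involves only $e$ and $e'$. Finally, neither graph has loops: $U_{\mathcal{R}}$ is taken as a simple-graph structure, and $L(H)$ only joins distinct $e\neq e'$. So $\psi$ preserves both adjacency and non-adjacency.

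The only point needing any care is the loop convention. Since $\varepsilon(e)\cap\varepsilon(e)\neq\emptyset$ trivially, one must read $U_{\mathcal{R}}$ as a set of pairs of distinct classes, matching the condition $e\neq e'$ in the definition of $L(H)$. As an optional consistency check, I would note that the relation $U_{\mathcal{R}}$ coincides with the one induced by $\Gamma_H$ through $\pi_{\mathcal{R}}$. Indeed, $\varepsilon(e)\cap\varepsilon(e')\ni x$ gives the edge $\{xe,xa\}$ with $a=e'$, which has a nonzero entry $\pm\ii$ in $A_H$. So the quotient construction agrees with the one coming from the adjacency matrix, but this is not needed for the isomorphism itself.
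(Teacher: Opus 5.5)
Your proof is correct and is essentially the paper's argument. The paper uses the inverse map $f\colon E\to VE/\mathcal{R}$, $f(e)=\mathcal{R}(xe)$ for any $x\in\varepsilon(e)$, and checks bijectivity and preservation of adjacency exactly as you do, with your explicit remarks on where $\varepsilon(e)\neq\emptyset$ is used and on excluding loops from $U_{\mathcal{R}}$ making precise two points the paper leaves implicit.
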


\begin{proof}

Let us define $f: E \longrightarrow VE\textfractionsolidus\mathcal{R}$ as follows:
\[
f(e)=\mathcal{R}(xe)\; \textrm{for one}\; x\in \varepsilon(e),
\] 
(this is clearly independent of $x$); then $f$is clearly surjective  and

\[
f(e)=f(e') \Longrightarrow \mathcal{R}(xe)= \mathcal{R}(x'e') \Longrightarrow e=e'; \; \textrm{so} \; f\; \textrm{is a bijection}.
\]
Recall that $L(H)=(V';E')$ with $V'=E$ and $\{ e, e'\} \in E'$ iff ${\displaystyle \varepsilon(e) \cap \varepsilon(e') \neq \emptyset}$.\\
Hence ${\displaystyle f: V'=E \longrightarrow VE/\mathcal{R}}$ is bijective, moreover it is a graph isomorphism:
\begin{description}
\item[-] if $ \{ e,e'\} \in E'$,  then $\varepsilon(e) \cap \varepsilon(e') \neq \emptyset$: choose $x$ in this intersection; $f(e)=\mathcal{R}(xe), f(e')=
\mathcal{R}(xe')$ and $\{ \mathcal{R}(xe), \mathcal{R}(xe') \} \in U_{\mathcal{R}}$ since $\varepsilon(e) \cap \varepsilon(e') \neq \emptyset$
\item[-] if $f(e)=\mathcal{R}(xe)$ is adjacent to $f(e')=\mathcal{R}(x'e')$, then $\{ \mathcal{R}(xe), \mathcal{R}(xe') \} \in U_{\mathcal{R}}$ ; therefore $\varepsilon(e) \cap \varepsilon(e') \neq \emptyset$ and $\{e,e' \} \in E'$.
\end{description}
\end{proof}

\subsubsection{Properties of $\mathcal{R}'$}
Let $H = (V; E, \varepsilon)$ be a hypergraph, the \emph{2-section} of  $H$ is the graph, denoted by $[H]_{2}$, where:
\begin{itemize}
\item the vertices set  of $[H]_{2}$ is the  set $V$ of vertices of $H$;
\item   two distinct vertices $x,y$ form an edge in $[H]_{2}$ if and only if  there is $e\in E$  such that $x,  y\in \varepsilon(e)$.
\end{itemize}

As with the first equivalence relation, $\mathcal{R}'$ allows us to define a simple graph structure on the quotient $VE/\mathcal{R}'$ as follows: ${\displaystyle \Gamma_H/ \mathcal{R}' = (VE/\mathcal{R}' ;U_{\mathcal{R}'})}$ with:

\[
\{\mathcal{R}'(xe), \mathcal{R}'(x'e') \} \in U_{\mathcal{R}'} \; \text{if and only if there exists} \; a \in E \; \text{such that}\;\; x,x' \in \varepsilon(a)
\]
As above, the relation is well-defined.
Therefore, we also have a canonical map:

\[
\pi_{\mathcal{R}'} : \Gamma_H=(VE;U) \longrightarrow \Gamma_H/ \mathcal{R}' =(VE/\mathcal{R}'; U_{\mathcal{R}'})
\]
which is a (surjective) morphism of simple graphs in the sense that if $xe$ is adjacent to $x'e'$, then $\pi_{\mathcal{R}'} (xe)$ and $\pi_{\mathcal{R}'}(x'e')$ are equal or adjacent: indeed,
\begin{itemize}
\item[-] if $x=x'$, then $\mathcal{R}(xe)=\mathcal{R}(x'e') \}$, by definition;
\item[-] If $x\neq x'$, we have ${\displaystyle \{ \mathcal{R}'(xe), \mathcal{R}'(x'e') \} \in U_{\mathcal{R}}}$ if and only if $x\neq x' \in \varepsilon(e) =\varepsilon(e')$.
\end{itemize}


\begin{prop} 
Let $\Gamma_H$ be the associated graph $H$; the quotient graph $\Gamma_H/ \kappa =(VE/\mathcal{R}'; U_{\mathcal{R}'})$ is isomorphic to $[H]_2=(V;D) $ the 2-section of $H$.
\end{prop}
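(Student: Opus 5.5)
The plan mirrors the proof of the preceding proposition on $\mathcal{R}$ and the line-graph, with the roles of vertices and hyperedges exchanged. Define
\[
g: V \longrightarrow VE/\mathcal{R}', \qquad g(x)=\mathcal{R}'(xe)\ \textrm{for one}\ e\in H(x).
\]
This is well defined because $H$ has no isolated vertex, so $H(x)\neq\emptyset$. It is also independent of the chosen $e$, since all pairs $xe$ with the same first coordinate lie in one $\mathcal{R}'$-class.

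The first step is to show that $g$ is a bijection.

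\emph{Surjectivity.} Any class has the form $\mathcal{R}'(xe)$ with $xe\in VE$, and this class equals $g(x)$.

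\emph{Injectivity.} If $g(x)=g(x')$, then $\mathcal{R}'(xe)=\mathcal{R}'(x'e')$ for suitable $e,e'$. By the definition of $\mathcal{R}'$ this gives $x=x'$.

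The second step is to check that $g$ preserves and reflects adjacency. Recall that $\{x,x'\}$ is an edge of $[H]_2$ if and only if $x\neq x'$ and there exists $a\in E$ with $x,x'\in\varepsilon(a)$.

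\emph{Edges of $[H]_2$ map to edges.} Suppose $\{x,x'\}$ is an edge of $[H]_2$, witnessed by $a$. Then $g(x)=\mathcal{R}'(xa)$ and $g(x')=\mathcal{R}'(x'a)$. These classes are distinct because $x\neq x'$, and they are adjacent in $U_{\mathcal{R}'}$ because $x,x'\in\varepsilon(a)$. In fact $xa$ and $x'a$ are already adjacent in $\Gamma_H$, since $a_{xa,x'a}=1$.

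\emph{Edges of the quotient come from edges.} Conversely, suppose $g(x)$ and $g(x')$ are adjacent in $\Gamma_H/\mathcal{R}'$. By definition of $U_{\mathcal{R}'}$ there is $a\in E$ with $x,x'\in\varepsilon(a)$. Since the quotient is a simple graph and the classes are distinct, $x\neq x'$. Hence $\{x,x'\}$ is an edge of $[H]_2$.

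The only point needing care is the well-definedness of $U_{\mathcal{R}'}$, that is, that the adjacency condition does not depend on the representatives $xe$ and $x'e'$. This is immediate because the condition "there exists $a$ with $x,x'\in\varepsilon(a)$" involves only $x$ and $x'$, which are constant on classes. One should also note that loops are excluded: in a simple graph a class is not adjacent to itself, which matches the requirement that the two vertices of an edge of $[H]_2$ be distinct.

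No real obstacle is expected. The argument is a direct transcription of the line-graph proof, and the identification $VE/\mathcal{R}'\cong V$ is the dual of $VE/\mathcal{R}\cong E$. Equivalently, one could apply the previous proposition to $H^*$, using $L(H^*)=[H]_2$ and the fact that $\mathcal{R}$ for $H^*$ corresponds to $\mathcal{R}'$ for $H$ under $xe\mapsto (X_e,E_x)$. The direct argument is cleaner, however, because it avoids the simplicity and linearity hypotheses used when comparing $\Gamma_H$ with $\Gamma_{H^*}$.
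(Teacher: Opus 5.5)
Your proof is correct and follows the paper's argument: the same map $g(x)=\mathcal{R}'(xe)$, the same bijectivity check, and the same two-directional adjacency verification. You also make explicit two points the paper leaves implicit, namely that $g$ is defined because there are no isolated vertices and that adjacent classes must be distinct.
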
 

\begin{proof}

Recall that $\{x,y\} \in D $ iff exists $e \in E $ such that $x,y \in \varepsilon(e)$.\\
Define $g: V \longrightarrow VE/\mathcal{R}'$ by the rule 
\[
g(x)=\mathcal{R}'(xe)\;  \textrm{where}\;  e\; \textrm{ is choose such that}\; x\in \varepsilon(e);
\]
this is  independent of $e$ since if $x \in \varepsilon(e')$ then $\mathcal{R}'(xe)=\mathcal{R}'(xe')$).\\
Clearly  $g$ is surjective and $g(x)=g(y) \Longrightarrow \mathcal{R}'(xe)= \mathcal{R}'(x'e') \Longrightarrow x=y$; so, $g$ is a bijection.\\
This bijection gives rise to an isomorphism of graphs:

\begin{description}
\item[-] if $\{x,y\} \in D: x,y \in \varepsilon(e) \Longrightarrow \{ \mathcal{R}'(xe), \mathcal{R}'(ye) \} \in U_{\mathcal{R}'}$;
\item[-] if $\mathcal{R}'(xe)$ is adjacent to $\mathcal{R}'(x'e')$, there exists $a \in E$ such that $x,y \in \varepsilon(a): \{x,y \} \in D$.
\end{description}
\end{proof}

The adjacency matrix $B_H$ of $\Gamma_H$ verify $b_{xe,x'e'} =|a_{xe,x'e'}|$ and may be decomposed as $B_H=B_H' +B_H"$ where
$b'_{xe,x'e'}= Re ( a_{xe,x'e'})$ and $b"_{xe,x'e'}=|Im(a_{xe,x'e'}|$.\\
So for every $x$ in $V$ : $d_h(x)= \sum_{x'e'}|Im(a_{xe,x'e'}|+1$ (independent  of $e$ such that $x \in \varepsilon(e)$), \\
and for every $e$ in $E$ :$|\varepsilon(e)|= \sum_{x'e'} Re(a_{xe,x'e'})+1$ (independent of $x \in \varepsilon(e)$).\\
We have:


\begin{prop}\mbox{}
\begin{description}
\item[i)] If $H$ is $k$-regular then $\lambda=k-1 \in Sp(B"_H):   B"_H(1)= (k-1) 1$;
\item[ii)] if $H$ is $k$-uniform then $\lambda=k-1 \in Sp(B'_H):  B'_H(1)= (k-1) 1$.
\end{description}
\end{prop}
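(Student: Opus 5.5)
The plan is to compute row sums directly, using the entry-wise description of $A_H$ and the counts in Proposition \ref{resultgeneral} iii). Let $\mathbf{1}$ be the all-ones vector in $\braket{VE}$. For every matrix $M$ indexed by $VE$, the component $(M\mathbf{1})_{xe}$ equals the sum of row $xe$. So for each statement it suffices to show that every row of the relevant matrix sums to $k-1$.

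For ii), note that $b'_{xe,x'e'}=\mathrm{Re}(a_{xe,x'e'})$. The only entries of $A_H$ with nonzero real part are the entries equal to $1$, and these occur exactly when $e'=e$ and $x'\neq x$. By Proposition \ref{resultgeneral} iii), row $xe$ contains exactly $|\varepsilon(e)|-1$ such entries. This is also the identity $|\varepsilon(e)|=\sum_{x'e'}\mathrm{Re}(a_{xe,x'e'})+1$ recorded just before the statement. If $H$ is $k$-uniform, then $|\varepsilon(e)|=k$ for all $e$, so every row of $B'_H$ sums to $k-1$. Hence $B'_H\mathbf{1}=(k-1)\mathbf{1}$. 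Since $\mathbf{1}\neq 0$, we get $k-1\in Sp(B'_H)$.

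For i), note that $b''_{xe,x'e'}=|\mathrm{Im}(a_{xe,x'e'})|$. This entry is nonzero, and then equal to $1$, exactly when $x'=x$ and $e'\neq e$; this is where $a_{xe,x'e'}=\pm\ii$. Again by Proposition \ref{resultgeneral} iii), the number of such entries in row $xe$ is $d_H(x)-1$, matching the identity $d_H(x)=\sum_{x'e'}|\mathrm{Im}(a_{xe,x'e'})|+1$. Under $k$-regularity, $d(x)=k$ for all $x$, so every row of $B''_H$ sums to $k-1$. Thus $B''_H\mathbf{1}=(k-1)\mathbf{1}$.

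There is no real obstacle here. The only care needed is to check that taking real part and absolute imaginary part separates the $1$-entries from the $\pm\ii$-entries without overlap. This holds because the two kinds of entries sit in disjoint positions: $e=e'$ for the $1$-entries, and $x=x'$ with $e\neq e'$ for the $\pm\ii$-entries. One may add that both matrices are real symmetric, so $k-1$ is a genuine real eigenvalue. Moreover, each is a nonnegative matrix with all row sums $k-1$, so $k-1$ is in fact its spectral radius.
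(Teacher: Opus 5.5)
Your proof is correct and is exactly the row-sum argument the paper leaves as ``Clear''. It rests on the identities $d_H(x)=\sum_{x'e'}|\mathrm{Im}(a_{xe,x'e'})|+1$ and $|\varepsilon(e)|=\sum_{x'e'}\mathrm{Re}(a_{xe,x'e'})+1$, recorded just before the statement; your closing remark that $k-1$ is then the spectral radius of these nonnegative matrices is a correct, slightly stronger addition.
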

\begin{proof}
Clear
\end{proof}


\subsection{Laplacian}
Let $D=(d_{xe,xe})$ be  the diagonal matrix where $d_{xe,xe}= d(x)+|\varepsilon(e)|-2= \sum_{x'e'} |a_{xe,x'e'}|$ and $\mathcal{D}$ the associated endomorphism. We define $\mathcal{L}=\mathcal{D} - \mathfrak{A}$ as \index{laplacian of $H$} \emph{laplacian} of $H$, and we note:
\[
\textrm{Spec}(\mathcal{L}) =\{ \nu_1 \ge \nu_2 \ldots , \nu_{m_E} \}\; \textrm{the spectrum of}\;  \mathcal{L}.
\]

\begin{prop}
Let $H=(V; E, \varepsilon)$ be a hypergraph and let $\mathcal{L}$ be the associated Laplacian, then:
\begin{itemize}
\item[i)] $\mathcal{L}$ is a semi-positive definite endomorphism;
\item[ii)] $\sum_{1 \le i \le m_E} \nu_i = \sum_{xe} d(x)+|\varepsilon(e)|-2$.
\end{itemize}
\end{prop}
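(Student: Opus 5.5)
The plan is to work directly with the Hermitian form $\braket{\mathcal{L}(\xi)|\xi}$ and write it as a sum of squared moduli, one term for each edge of the associated graph $\Gamma_H$. Assertion ii) will then come from a trace computation.

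For i), first note that $\mathcal{L}=\mathcal{D}-\mathfrak{A}$ is Hermitian. Indeed $\mathcal{D}$ is real diagonal, and $\mathfrak{A}$ is Hermitian by Proposition \ref{resultgeneral} i). So it suffices to show $\braket{\mathcal{L}(\xi)|\xi}\ge 0$ for every $\xi=\sum u_{xe}\,xe$.

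Index the basis $VE$ by $u,v$, and fix a total order on it (for instance $VE^{rl}$). The key observation is that every nonzero entry of $A_H$ has modulus one, since $a_{uv}\in\{0,1,\pm\ii\}$. Also, by definition $d_{uu}=\sum_v |a_{uv}|$ is exactly the number of nonzero entries in row $u$. I will then verify the identity
\[
\braket{\mathcal{L}(\xi)|\xi}=\sum_{u<v,\; a_{uv}\neq 0}\bigl|u_{u}-a_{uv}\,u_{v}\bigr|^2 ,
\]
possibly with $a_{uv}$ replaced by $\overline{a}_{uv}$ depending on the convention for which argument is conjugated.

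To check the identity, expand each square. The pure terms give $|u_u|^2+|a_{uv}|^2|u_v|^2=|u_u|^2+|u_v|^2$. Summed over the unordered pairs, each $|u_u|^2$ appears once for each nonzero entry in row $u$, so these terms give $\sum_u d_{uu}|u_u|^2=\braket{\mathcal{D}(\xi)|\xi}$. The cross terms give $-2\,\mathrm{Re}\sum_{u<v}\overline{u_u}a_{uv}u_v$. Because $a_{vu}=\overline{a_{uv}}$ and the diagonal of $A_H$ is zero, this equals $-\braket{\mathfrak{A}(\xi)|\xi}$. Hence the form is a sum of nonnegative terms, and $\mathcal{L}$ is positive semi-definite, so all $\nu_i\ge 0$.

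A shortcut would also work: $D-A_H$ is a Hermitian matrix with nonnegative diagonal that is weakly diagonally dominant, since $d_{uu}=\sum_{v}|a_{uv}|$. Gershgorin's theorem then places every eigenvalue in $[0,2d_{uu}]$ for some $u$. I would mention this as an alternative, but I would keep the sum-of-squares identity as the main argument.

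For ii), $\sum_i \nu_i=\mathrm{Tr}(\mathcal{L})=\mathrm{Tr}(D)-\mathrm{Tr}(A_H)$. By Proposition \ref{firstProperties} v), the diagonal of $A_H$ is zero, so $\mathrm{Tr}(A_H)=0$. This leaves $\sum_{xe}\bigl(d(x)+|\varepsilon(e)|-2\bigr)$.

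The only delicate step is the bookkeeping of conjugations in the cross term. One must make sure that $\overline{u_u}a_{uv}u_v+\overline{u_v}a_{vu}u_u=2\,\mathrm{Re}(\overline{u_u}a_{uv}u_v)$, which uses Hermitian symmetry. One must also use the paper's convention that $\braket{\cdot|\cdot}$ is linear in the first slot. Choosing the sign inside $|u_u\mp a_{uv}u_v|$ correctly then follows routinely.
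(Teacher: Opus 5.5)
Your proof is correct. Part ii) coincides with the paper's trace computation, but for part i) your main route differs from the paper's.

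The paper takes an eigenvector $\xi=\sum\alpha_{xe}xe$ for $\nu$ and chooses a coordinate $\alpha_{ya}$ of maximal modulus. It then bounds $|\nu-l_{ya,ya}|\le\sum_{xe\neq ya}|l_{ya,xe}|=d_{ya,ya}=l_{ya,ya}$, whence $\nu\ge 0$ since $\nu$ is real. This is exactly the Gershgorin/diagonal-dominance step you offer as a shortcut, proved by hand.

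Your primary argument instead writes the Hermitian form as a sum of squares over the edges of $\Gamma_H$. With the paper's conventions ($\mathfrak{A}(u)=\sum_v a_{uv}v$, and $\braket{\cdot|\cdot}$ linear in the first slot) one gets $\braket{\mathfrak{A}(\xi)|\xi}=\sum_{u,v}a_{uv}\xi_u\overline{\xi_v}$. The identity therefore reads $\braket{\mathcal{L}(\xi)|\xi}=\sum_{u<v,\ a_{uv}\neq 0}|a_{uv}\xi_u-\xi_v|^2$, which is your expression with $\overline{a}_{uv}$; this settles your hedge about conjugation.

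\textbf{What each route buys.} The eigenvector argument is shorter and uses only weak diagonal dominance, not $|a_{uv}|=1$. The same inequality also gives the upper bound $\nu_1\le 2\max_{xe}\bigl(d(x)+|\varepsilon(e)|-2\bigr)$.

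Your identity, as written, relies on the entries being unimodular, but it describes the equality case. $\mathcal{L}(\xi)=0$ iff $\xi_v=a_{uv}\xi_u$ along every edge of $\Gamma_H$. Hence $0\in\mathrm{Spec}(\mathcal{L})$ exactly when some connected component of $\Gamma_H$ is balanced, i.e.\ the product of the entries $a_{uv}$ around every cycle equals $1$. For example, a vertex $x$ lying in $e_1<e_2<e_3$ yields the triangle $xe_1,xe_2,xe_3$ with product $\ii\cdot\ii\cdot(-\ii)=\ii\neq 1$, so $\mathcal{L}$ is positive definite on that component. The Gershgorin argument gives no such information.
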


\begin{proof}
\mbox{}

\begin{itemize}
\item[i)] Let $L=(l_{xe,x'e'})$ be the matrix of $\mathcal{L}$ in the base $VE$. $L$ being diagonalizable, it suffices to prove that the $\nu_i$ are non-negative: let ${\displaystyle \xi=\sum_{xe\in VE} \alpha_{xe}xe}$ be an eigenvector associated with $\nu$, let us choose $ya$ such that ${\displaystyle \alpha_{ya}=max\{ \vert\alpha_{xe}\vert: \alpha_{xe}\in \xi|}$, we have:
\[
\nu \alpha_{ya}= \sum_{xe\in VE}l_{ya, xe}\alpha_{xe}= l_{ya, ya}+ \sum_{\substack{ xe\in VE\\ xe\neq ya}}l_{ya, xe}\alpha_{xe};
\]
therefore
\[
\nu =l_{ya, ya}+ \sum_{\substack{ xe\in VE\\ xe\neq ya}}l_{ya, xe}\frac{\alpha_{xe}}{\alpha_{ya}};
\]
 thus,
\[
\begin{split}
\vert\sum_{\substack{ xe\in VE\\ xe\neq ya}}l_{ya, xe}\frac{\alpha_{xe}}{\alpha_{ya}}\vert\leq \sum_{\substack{ xe\in VE\\ xe\neq ya}}\vert l_{ya, xe}\vert &= \sum_{xe\in VE}\vert a_{ya, xe}\vert= d(y)+\varepsilon(a)-2\\
&= d_{ya, ya}= l_{ya, ya}
\end{split}
\]
we conclude that: $\nu\geq 0$.
\begin{rem}
This is a general fact: $L$ is a diagonal dominant Hermitian matrix; more precisely, if $A=(a_{i,j})$ is Hermitian satisfying $a_{i,i} \ge 0$ and $\forall i \;\; |\sum_j a_{i,j} | \le a_{i,i}$, then $A$ is semi-positive definite.
\end{rem}
\item[ii)] $Tr(\mathcal{L})= \sum \nu_i =Tr(L)=Tr(D)= \sum_{xe} d(x)+ |\varepsilon(e)|-2$.
\end{itemize}

\end{proof}
\begin{rem}
Another interesting Laplacian matrix is the one we can express starting from the associated graph, (see Paragraph \ref{associateGraph}):
\[
L(\Gamma_{H}) = D - A(\Gamma_{H}).
\]
\end{rem}
\section{Conclusion}
This article introduced a new type of matrix associated with a hypergraph; this matrix is based on a new definition (representation) of hypergraphs. 
Naturally, a more detailed study should be developed. We can also consider developing an algorithm that reconstructs the hypergraph from its adjacency matrix. 
This construction gives rise to two graphs: the associated graph and the associated mixed graph. An interesting question arises:\\

\noindent "What mixed graphs generate an adjacency matrix of a hypergraph?"\\

\noindent This question can also be asked for associated graphs.
Furthermore, other problems can be studied, for example:
does the adjacency matrix provide information about the colorings of the hypergraph?

\nocite{*}
 \bibliographystyle{plain}

 \bibliography{biblio}

 \end{document}